\documentclass[11pt]{article}
\usepackage{amsmath, amsfonts, amsthm, amssymb}
\usepackage{fullpage}

\usepackage[x11names, rgb]{xcolor}
\usepackage{graphicx}
\usepackage{tikz}
\usetikzlibrary{decorations,arrows,shapes}

\usepackage[colorlinks=true,linkcolor=blue!50!black,citecolor=blue!50!black]{hyperref}
\usepackage{etoolbox}
\usepackage{enumerate}
\usepackage{listings}
\usepackage{booktabs}
\usepackage{comment}
\usepackage[affil-it]{authblk}
\usepackage{float}
\usepackage{algorithm2e}
\usepackage{mathtools}

\newtheorem{theorem}{Theorem}[section]
\newtheorem{lemma}[theorem]{Lemma}
\newtheorem{proposition}[theorem]{Proposition}

\newtheorem{conjecture}[theorem]{Conjecture}

\newtheorem*{remark}{Remark}
\newtheorem{assumption}{Assumption}

\newcommand{\Z}{\mathbb{Z}}
\newcommand{\E}{\mathbb{E}}
\newcommand{\Var}{\operatorname{Var}}
\newcommand{\Cov}{\operatorname{Cov}}

\newcommand{\cE}{\mathcal E}
\title{Variance Bounds for Minimum Perfect Matching Problem by Random Walk on Multinomial Distribution}
\author{Shi Feng \thanks{Cornell University.
Email: \texttt{sf599@cornell.edu}}}
\date{November, 2026}
\begin{document}
\maketitle
\begin{abstract}
We establish variance upper bounds for the minimum perfect
matching problem in general metric spaces under a geometric
partition condition.
As a principal application, let $H_n$ be the minimum perfect
matching cost of $n$ independent uniform points in $[0,1]^d$,
where $n$ is even and $d\ge2$. We prove
\begin{align*}
    \Var(H_n)\le C d\,n^{1-2/d},
\end{align*}
where $C$ is independent of $n$ and $d$.
This removes the logarithmic losses in earlier upper bounds in every dimension
\cite{Talagrand1995}; in particular, the variance remains
bounded in dimension two, answering an open question in \cite{Rhee1994}. The proof relies on constructing a rapidly mixing Markov process
with a multinomial stationary distribution. 

\end{abstract}

\section{Introduction}\label{sec:introduction}
Random optimization problems study optimal structures in
random environments. Classical examples include minimum
perfect matching, which pairs random points to minimize
the total distance; first- and last-passage percolation,
which minimize or maximize the total random weight along
admissible paths; and the Sherrington--Kirkpatrick (SK) model,
whose ground-state energy is obtained by minimizing a
random quadratic form over spin configurations in
$\{-1,1\}^n$.

First-order limit theorems describe the leading growth of these
costs, and extensive work has established a detailed understanding
of this behavior in many models
\cite{AuffingerDamronHanson2017,Yukich1998,Talagrand2006}.
Fluctuations concern a finer aspect of the problem: how much
the optimal cost varies between independent samples of the same
size. Their scale and distribution remain much less understood,
with many basic questions still open \cite{Chatterjee2014}.
A central difficulty is that local changes in the random input
can cause global changes in the optimizer, making sharp control
of the resulting cost differences difficult. Direct applications
of the Efron--Stein or Poincar\'e inequality can consequently
give bounds much larger than the true variance scale.

In this paper, we study the variance of the minimum perfect
matching cost, obtained by partitioning random points into
pairs so as to minimize the total distance within pairs.
We establish variance upper bounds under a geometric
partition condition on the underlying metric space, with
Euclidean matching as a principal application.

\subsection{Model and main result}

Let $(\mathcal X,d)$ be a separable metric space equipped with a Borel
probability measure $\mu$. For positive even $n$, let
$X_1,\ldots,X_n$ be independent points with common law $\mu$.
Let $\mathcal P$ range over partitions of $\{1,\ldots,n\}$ into
pairs, and define the minimum perfect matching cost by
\begin{align*}
    H_n:=\min_{\mathcal P}
    \sum_{\{i,j\}\in\mathcal P}d(X_i,X_j).
\end{align*}
Every pair is allowed. Repeated locations are retained as distinct
sample points and may be paired at zero cost.

\begin{assumption}\label{ass:partition}
The space $\mathcal X$ has a measurable partition
$\mathcal C_1,\ldots,\mathcal C_q$ into cells of equal
probability up to a $\mu$-null set, with
$\mu(\mathcal C_a)=1/q$ for every $a\in[q]$.
Write $\mu_a:=\mu(\,\cdot\mid\mathcal C_a)$ be the conditional law of $\mu$ given in $\mathcal C_a$.
There exist a connected graph $G=([q],E(G))$ and a
number $\ell>0$ such that
\begin{align*}
    \iint d(x,y)^2\,\mu_a(\mathrm dx)\mu_a(\mathrm dy)
    &\le\ell^2,
    &&a\in[q],\\
    \iint d(x,y)^2\,\mu_a(\mathrm dx)\mu_b(\mathrm dy)
    &\le\ell^2,
    &&\{a,b\}\in E(G).
\end{align*}
\end{assumption}
\begin{remark}
Assumption~\ref{ass:partition} holds if
\begin{align*}
    \operatorname{diam}(\mathcal C_a)&\le\ell,
    &&a\in[q],\\
    \operatorname{diam}(\mathcal C_a\cup\mathcal C_b)&\le\ell,
    &&\{a,b\}\in E(G).
\end{align*}
Indeed, each of these diameter bounds gives
$d(X,Y)^2\le\ell^2$ for independent points
from the corresponding cells. Taking expectations gives
the required second-moment bounds.
\end{remark}
Here $[q]:=\{1,\ldots,q\}$ and
$\operatorname{diam}(A):=\sup_{x,y\in A}d(x,y)$.
The integrals are expectations for independent points drawn
from the indicated conditional distributions. Thus the mean
squared distance is at most $\ell^2$ within each cell and
between cells joined in $G$. Write $\rho:=n/q$ for the
expected number of points in each cell.

\begin{theorem}\label{thm:main}
There are universal constants $\rho_*>0$ and $C<\infty$
such that, under Assumption~\ref{ass:partition}, for every
positive even $n$ with $n/q\ge\rho_*$, we have
\begin{align*}
    \Var(H_n)\le Cn\ell^2.
\end{align*}
\end{theorem}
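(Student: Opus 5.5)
The plan is to split the randomness into two layers: the cell counts $N=(N_1,\dots,N_q)$, where $N_a:=\#\{i:X_i\in\mathcal C_a\}$, and the positions of the points inside their cells. Given $N$, the points in $\mathcal C_a$ are i.i.d.\ with law $\mu_a$ and independent across cells. The law of total variance gives
\begin{align*}
    \Var(H_n)=\E\bigl[\Var(H_n\mid N)\bigr]+\Var\bigl(f(N)\bigr),
    \qquad f(N):=\E[H_n\mid N],
\end{align*}
and the aim is to show that each term is at most a constant times $n\ell^2$.

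\textbf{Within-cell term.} I would use the Efron--Stein inequality conditionally on $N$. Resampling a point $X_i\in\mathcal C_a$ as $X_i'\sim\mu_a$ changes $H_n$ by at most $d(X_i,X_i')$: keep the same pairing, and the triangle inequality bounds the change in the edge at $i$. By Assumption~\ref{ass:partition}, $\E[d(X_i,X_i')^2]\le\ell^2$. Summing over the $n$ points gives
\begin{align*}
    \E\bigl[\Var(H_n\mid N)\bigr]\le n\ell^2 .
\end{align*}

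\textbf{Count term.} Here $N$ is Multinomial$(n;1/q,\dots,1/q)$. I would prove a Poincaré inequality for $f$ through a reversible Markov chain with this stationary law whose moves transfer one point from a cell $a$ to a neighbouring cell $b$ with $\{a,b\}\in E(G)$. For such a move there is a natural coupling: replace one uniformly chosen point of $\mathcal C_a$ by a fresh point from $\mu_b$ and keep the same pairing. The triangle inequality and the second-moment bound then give
\begin{align*}
    \bigl(f(N-e_a+e_b)-f(N)\bigr)^2\le\E\bigl[d(x,y)^2\bigr]\le\ell^2 .
\end{align*}
The Poincaré inequality then yields
\begin{align*}
    \Var f(N)\le \gamma^{-1}\,\ell^2\,\E[\text{total jump rate}],
\end{align*}
where $\gamma$ is the spectral gap of the chain. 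So the whole problem reduces to building a chain on multinomial configurations with $\E[\text{total rate}]/\gamma=O(n)$, uniformly in the graph $G$.

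\textbf{Main obstacle.} The obvious candidate fails. If each of the $n$ particles independently runs the simple random walk on $G$, the gap equals that of a single walk, which can be as small as about $q^{-2}$. That would reintroduce dependence on the geometry, and in the Euclidean application it would reproduce the logarithmic losses. What I would try first is to exploit $\rho\ge\rho_*$. Since the counts are concentrated, with $N_a=\rho+O(\sqrt\rho)$, I would design a chain with state-dependent rates, for example count-dependent rates in the style of zero-range processes, still moving only along edges of $G$. The rates would be tuned to satisfy detailed balance for the multinomial law, while discrepancies are transported in a way that mixes in $O(1)$ time with $O(n)$ total rate. Equivalently, I would run a canonical-path or multicommodity-flow comparison in which the congestion on each edge is paid for by the $\sim\rho$ particles available there. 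I expect essentially all the difficulty to sit in proving this gap bound. The threshold $\rho_*$ should appear there, as the density at which local fluctuations are small enough for the comparison to close.
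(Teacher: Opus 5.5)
Your within-cell term is correct and matches the paper's conditional Efron--Stein lemma (which even gives $\frac12 n\ell^2$). The gap is in the count term. There you reduce the theorem to a statement that is false: no chain whose moves transfer a single point along an edge of $G$ can have $\E[\text{total rate}]/\gamma=O(n)$ uniformly in $G$. This holds whatever state-dependent rates or canonical paths you use.

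To see this, take any $u:[q]\to\mathbb R$ with $|u_a-u_b|\le1$ for $\{a,b\}\in E(G)$, for example $u_a=\operatorname{dist}_G(a,a_0)$, and test the Poincar\'e inequality on $f(N)=\sum_a u_aN_a$. Every such move changes $f$ by at most $1$, so $\cE(f,f)\le\frac12\E[\text{total rate}]$, and therefore
\[
    \frac{\E[\text{total rate}]}{\gamma}\ \ge\ 2\Var_{\pi_{n,q}}(f)=2n\Var_{a\sim\mathrm{Unif}[q]}(u_a).
\]
For a path with $a_0$ an endpoint this is $n(q^2-1)/6$. For the face-adjacency grid of the Euclidean application, take $u_a$ to be the first coordinate index of the cell; this gives order $nq^{2/d}$. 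The best bound your method can then deliver is of order $n\ell^2q^{2/d}\asymp dn$, no better than applying Efron--Stein directly to the $n$ points.

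A quicker way to see the obstruction: your argument uses only that moving one point from $x$ to $y$ changes the functional by at most $d(x,y)$. The functional $\sum_i X_{i,1}$ on $[0,1]^d$ has that property but variance $n/12\gg n^{1-2/d}$. So no argument built solely on that property can prove the theorem.

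The missing idea is a property specific to matchings. Moving \emph{two} points together from cell $a$ to an \emph{arbitrary} cell $b$, however distant, changes $H_n$ by at most $d(x_1,x_2)+d(y_1,y_2)$: the two former partners can be matched to each other, and the two new points to each other. This licenses the paper's long-range pair moves $k\to k-2e_a+2e_b$ at rate $(k_a)_2/q$. Their expected total rate is at most $n(n-1)/q=O(n)$ for $\rho\le2\rho_*$, and their spectral gap is bounded below within each parity sector (via Hermon--Salez).

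Pair moves preserve the parity pattern $K\bmod 2$, so the paper also needs single-point local moves along a Hamiltonian path of $G^3$ and a separate Poincar\'e inequality for the parity layer. That inequality comes from Dobrushin uniqueness for the edge-bit representation of the parity pattern, whose influences are $O(e^{-c\rho})$; this is where $\rho_*$ actually enters. A comparison then brings it back to $\cE_{\rm loc}+\cE_{\rm pair}$. Large $\rho$ is reduced to $\rho\in[\rho_*,2\rho_*]$ by splitting each cell into $s$ randomly labelled types. Your intuition that $\rho_*$ is a density threshold is in the right spirit, but in the paper it controls parity correlations, not transport along $G$.
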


As a principal application, we prove asymptotically sharp variance bounds for minimum perfect matching problem in $[0,1]^d$ for $d\geq 2$. 

\begin{theorem}\label{thm:Euclidean}
    Let the metric space be $[0,1]^d$ with Euclidean distance for $d\geq 2$ and the probability measure be the uniform measure. Then
    \begin{align}
        \Var(H_n) \leq Cdn^{1-2/d}
    \end{align}
    for some universal constant $C$ independent of $n$ and $d$.
\end{theorem}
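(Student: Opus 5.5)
We derive Theorem~\ref{thm:Euclidean} from Theorem~\ref{thm:main} by choosing a cubic partition of $[0,1]^d$ that satisfies Assumption~\ref{ass:partition}. For small $n$ we use a trivial bound. Concretely, fix the universal constant $\rho_*$ from Theorem~\ref{thm:main} and set $m:=\lfloor (n/\rho_*)^{1/d}\rfloor$.

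\emph{Small $n$.} If $m\ge1$, i.e.\ $n\ge\rho_*$, split $[0,1]^d$ into $q=m^d$ congruent subcubes of side $1/m$. These cells have equal uniform probability $1/q$, and their boundaries form a null set. Let $G$ be the grid graph on the subcubes, joining two cells when they share a $(d-1)$-dimensional face; it is connected. For this choice we can take $\ell^2$ of order $d/m^2$, as follows.

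\emph{Second-moment bounds.} For independent uniform points $X,Y$ in one cell,
\[
\E|X-Y|^2=d\cdot\frac{2}{12m^2}=\frac{d}{6m^2}.
\]
For adjacent cells, the two points differ in a single coordinate direction, where $X_1-Y_1$ has mean of size $1/m$. That coordinate contributes $\E(X_1-Y_1)^2=1/m^2+1/(6m^2)$, and the other $d-1$ coordinates contribute $(d-1)/(6m^2)$ in total. So
\[
\E|X-Y|^2\le\frac{d+6}{6m^2}\le\frac{2d}{m^2}.
\]
We may therefore take $\ell^2=2d/m^2$. Using the second moments instead of diameters matters here. The diameter bound from the Remark also gives order $d/m^2$, since $\operatorname{diam}$ of two adjacent cells is $\sqrt{d+3}/m$, so either route works.

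\emph{Applying Theorem~\ref{thm:main}.} Since $m\le (n/\rho_*)^{1/d}$, we have $n/q=n/m^d\ge\rho_*$, so Theorem~\ref{thm:main} applies and gives
\[
\Var(H_n)\le Cn\ell^2=2Cd\,n\,m^{-2}.
\]
When $m\ge1$ we have $m\ge\tfrac12(n/\rho_*)^{1/d}$, hence $m^{-2}\le 4(\rho_*/n)^{2/d}\le 4\max(1,\rho_*)\,n^{-2/d}$. Together these give $\Var(H_n)\le C'd\,n^{1-2/d}$ with $C'$ universal; in particular $C'$ does not depend on $d$.

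\emph{Small $n$, treated directly.} If $n<\rho_*$, we use the trivial partition $q=1$. The single-cell assumption needs $\ell^2\ge d/6$, but $n/q=n<\rho_*$, so Theorem~\ref{thm:main} does not apply. Instead, bound crudely: $H_n\le\sum_{k}|X_{2k-1}-X_{2k}|$ by pairing consecutive indices. Then
\[
\Var(H_n)\le\E H_n^2\le \frac n2\sum_k\E|X_{2k-1}-X_{2k}|^2\le \frac{n^2 d}{12}\le\frac{\rho_*^2}{12}\,d\le C d\,n^{1-2/d},
\]
where the last step uses $n^{1-2/d}\ge1$ for $d\ge2$ and $n\ge2$.

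The only delicate point is keeping the constant independent of $d$, since $\rho_*^{2/d}$ could blow up. It is handled by $\rho_*^{2/d}\le\max(1,\rho_*)$ and by $m\ge\frac12(n/\rho_*)^{1/d}$, which holds because $\lfloor x\rfloor\ge x/2$ for $x\ge1$. Everything else is routine.
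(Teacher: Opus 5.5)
Your argument is correct and is essentially the paper's proof: partition $[0,1]^d$ into $m^d$ congruent subcubes with $n/m^d\ge\rho_*$, use the face-adjacency graph, bound $\ell^2\lesssim d/m^2\lesssim d\,n^{-2/d}$, and apply Theorem~\ref{thm:main}. You also make precise what the paper leaves implicit, through the explicit choice $m=\lfloor (n/\rho_*)^{1/d}\rfloor$ with $\rho_*^{2/d}\le\max(1,\rho_*)$ and the separate crude bound for $n<\rho_*$: the paper's ``fixed'' $M$ must in fact depend on $n$ and stay bounded uniformly in $d$, and no admissible $M$ exists when $n<\rho_*$. One small slip is that the heading ``Small $n$'' on your first case is mislabeled, since that case is $n\ge\rho_*$.
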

\begin{proof}
    Find some fixed $M$ such that $M^d \geq \rho_*$ and $\frac{1}{Mn^{-1/d}} \in \Z$. Let the cells be
    \begin{align*}
        \mathcal C_{k_1,...,k_d} = \{(x_1,...,x_d)\;|\; k_iMn^{-1/d}\leq x_i \leq (k_i+1)Mn^{-1/d} \}
    \end{align*}
    for $0\leq k_i\in \Z \leq \frac{n^{1/d}}{M}-1$. Then $\frac{n}{q} \geq \rho_*$. Let $G$ be the face-adjacency graph of the cells. Then $G$ is connected and $\ell\leq C\sqrt{d}n^{-1/d}$ by the diameter bound. By Theorem \ref{thm:main}, we have
    \begin{align*}
        \Var(H_n)\le C n\ell^2 \leq Cdn^{1-2/d}
    \end{align*}
\end{proof}

Taking $n=2$, it is not hard to see $\Var(H_2)$ is bounded by some constant uniformly in every dimension. Therefore, we conjecture that the $d$ on the right hand side can be removed.

\begin{conjecture}\label{conj:dimension-independent}
In the setting of Theorem \ref{thm:Euclidean},
\begin{align*}
    \Var(H_n)\le C n^{1-2/d}
\end{align*}
for some universal constant $C$ independent of $n$ and $d$
\end{conjecture}

\subsection{Related work}
Minimum perfect matching problem is mainly investigated in $[0,1]^d$ with uniform measure. The first-order behavior is classical.
There is a constant $\beta_d>0$ such that
\begin{align*}
    \E H_{n}\sim \beta_d n^{1-1/d}.
\end{align*}
A standard reference is Yukich
\cite[Theorem~7.1 and Corollary~7.2]{Yukich1998}, who treats a
broader class of Euclidean functionals and describes how the
leading constant depends on the sampling density.

For variance upper bounds, Talagrand
\cite[Section~11.5]{Talagrand1995} establishes, for $d\ge3$,
\begin{align*}
    \Var(H_n)
    \le C_d n^{1-2/d}(\log n)^{K_d},
\end{align*}
where $C_d$ and $K_d$ depend only on the dimension. In dimension
two, he records Rhee's bound \cite{Rhee1994}
$\Var(H_n)\le C(\log n)^2$. These bounds are obtained through Poissonization and recursive
spatial subdivision, combined with exponential-moment estimates and product-space concentration inequalities. The logarithmic
losses arise from accumulating subdivision errors.

We clarify a point concerning the upper bounds cited in the literature.
Following Corollary~2.2, \cite{Chatterjee2019} refers to
\cite{Steele1997} for upper bounds matching the fluctuation scale
$n^{1/2-1/d}$. However, the first paragraph on page~63 of
\cite{Steele1997} appears to contain a typographical error.
Integrating the tail bound in~(3.28) gives
$\mathbb{E}|Z-M| \le K n^{(d-1)/(2d)}$, rather than the printed
$K n^{(d-2)/(2d)}$, where $M$ is a median of $Z$.
Consequently, this argument does not establish the variance bound
$\operatorname{Var}(H_n)=O(n^{1-2/d})$.

For lower bounds, Talagrand
\cite[Section~11.5]{Talagrand1995} suggests that
$\Var(H_n)\ge c_d n^{1-2/d}$ for uniform points in the cube, writing that this ``seems to be an exercise,''
while noting that the argument has not been checked.
Chatterjee \cite[Corollary~2.2]{Chatterjee2019} proves that
the minimum matching length has fluctuations of order at least
$n^{1/2-1/d}$ for classes of sampling distributions that
include nondegenerate Gaussian and product exponential laws.
His result does not cover the uniform distribution on the cube.  In this paper, we only focus on the upper bound of variance for minimum perfect matching problem.

A closely related problem is bipartite matching, whose
connection to optimal transport and Kantorovich duality
has attracted considerable attention. Here, each point
in one random sample is paired with exactly one point
in a second independent sample of the same size,
minimizing the total distance between paired points. For uniform samples in $[0,1]^d$,
$d\ge3$, Goldman and Trevisan \cite{GoldmanTrevisan2021}
prove first-order asymptotics by partitioning the domain
and controlling local imbalances between the sample counts.
We also partition the domain, but control fluctuations
through a Poincar\'e inequality for the cell counts.

\subsection{Proof strategy}\label{ssec:proof_strategy}
To avoid accumulating errors from repeated subdivision, we work
directly with a single partition at the scale specified in
Assumption~\ref{ass:partition}. Although the assumption applies
to general metric spaces, it is helpful to keep in mind the
partition of the unit cube into subcubes of side length
comparable to $n^{-1/d}$ used in the proof of
Theorem~\ref{thm:Euclidean}.

First work with $\rho=n/q$ bounded above
and below by sufficiently large constants independent of $n$.
Let $K=(K_1,\ldots,K_q)$ be the count vector, where
\begin{align*}
    K_a=\#\{i\in\{1,\ldots,n\}:X_i\in\mathcal C_a\}.
\end{align*}
By the law of total variance, it suffices to establish the two bounds
indicated below:
\begin{align}\label{eq:upper_decomposition}
    \Var(H_n)
    =\underbrace{\E\bigl[\Var(H_n\mid K)\bigr]}_{\le C n\ell^2}
    +\underbrace{\Var\bigl(\E[H_n\mid K]\bigr)}_{\le C n\ell^2}.
\end{align}
The first term can be controlled directly by Efron-Stein inequality (Lemma \ref{lem:conditional-position-variance}). Controlling the second term is the main difficulty. We regard $g(k):=\E[H_n\mid K=k]$ as a function on the
space of count configurations
\begin{align*}
    \Omega_{n,q}
    =\left\{k\in\mathbb Z_{\ge0}^q:
      \sum_{a=1}^q k_a=n\right\}.
\end{align*}
Since the cells have equal probability, $K$ has law
\begin{align*}
    \pi_{n,q}
    =\operatorname{Mult}(n;q^{-1},\ldots,q^{-1}).
\end{align*}
The remaining task is therefore to bound
$\Var_{\pi_{n,q}}(g)$.

The key idea is to construct a Markov process adapted
to the matching cost. We seek a process reversible
with respect to $\pi_{n,q}$ whose Dirichlet form
$\cE_{\mathcal L}$ satisfies
\begin{align}\label{eq:Poincare}
    \Var_{\pi_{n,q}}(g)
    \le C\,\cE_{\mathcal L}(g,g)
    \le C n\ell^2.
\end{align}
These two inequalities impose competing requirements:
the process must mix rapidly enough to have a
Poincar\'e constant independent of $n$ and $q$, while
its transitions must perturb $g$ little enough to
keep the Dirichlet energy small.

More precisely, for a generator with transition rates
$c(k,k')\ge0$,
\begin{align*}
    (\mathcal Lf)(k)
    =\sum_{k'\in\Omega_{n,q}}
      c(k,k')\{f(k')-f(k)\},
\end{align*}
the associated Dirichlet form is
\begin{align*}
    \cE_{\mathcal L}(f,f)
    :=\frac12\E_{\pi_{n,q}}\left[
      \sum_{k'\in\Omega_{n,q}}
      c(K,k')\{f(k')-f(K)\}^2
    \right].
\end{align*}
Thus the energy measures the squared changes in $f$,
weighted by the rates at which they occur. The challenge
is to choose transitions that efficiently redistribute
the counts while producing only small changes in the
conditional mean matching cost.

We begin with moves that keep the Dirichlet energy small;
the precise couplings are given in
Lemma~\ref{lem:conditional-mean-energy}.
For local moves, use the path $T=([q],E)$ from
Proposition~\ref{prop:cell-path}, which joins nearby cells.
Moving one point along an edge changes the matching
cost by at most the distance it moves. This motivates
\begin{align*}
    (\mathcal L_{\rm loc}f)(k)
    =\sum_{\{a,b\}\in E}
    \bigl[
      k_a\{f(k-e_a+e_b)-f(k)\}
      +k_b\{f(k-e_b+e_a)-f(k)\}
    \bigr],
\end{align*}
where $e_a$ is the $a$th standard basis vector.
Thus each point moves to each neighboring cell at rate one.
These moves have small geometric cost, but redistribution
along the path can be slow.

The key observation is that moving two points together
between arbitrary cells also has a small cost. Their
former partners can be paired together, while the two
new points are paired with each other. The cost change
is controlled by distances within the source and
destination cells, regardless of how far apart they are.
We therefore introduce
\begin{align*}
    (\mathcal L_{\rm pair}f)(k)
    =\frac1q\sum_{a,b\in[q]}(k_a)_2
      \{f(k-2e_a+2e_b)-f(k)\},
    \qquad (u)_2:=u(u-1).
\end{align*}
This moves two points from cell $a$ to cell $b$ at rate
$(k_a)_2/q$. Moves producing negative counts have rate zero.

To understand whether the two types of moves together
produce a rapidly mixing Markov process, define the
parity pattern
\begin{align*}
    \sigma=(\sigma_1,\ldots,\sigma_q),
    \qquad \sigma_a:=k_a\bmod2.
\end{align*}
Pair moves redistribute points directly between distant
cells but leave $\sigma$ unchanged. Local moves change
the parities at the two cells involved, allowing transitions
between parity patterns. This suggests that their combination
\begin{align*}
    \mathcal L=\mathcal L_{\rm loc}+\mathcal L_{\rm pair}
\end{align*}
can mix rapidly both within and between parity patterns.

Let $\bar\pi$ denote the law of the parity pattern,
and let $\mu_\sigma$ denote the conditional law of
the count vector given that pattern:
\begin{align*}
    \bar\pi(\sigma)
    &:=\mathbb P(K\bmod2=\sigma),\\
    \mu_\sigma(k)
    &:=\mathbb P(K=k\mid K\bmod2=\sigma).
\end{align*}
Here $\mu_\sigma$ is defined whenever $\bar\pi(\sigma)>0$.
The law of total variance gives
\begin{align*}
    \Var_{\pi_{n,q}}(g)
    =\E_{\bar\pi}\bigl[\Var_{\mu_\sigma}(g)\bigr]
    +\Var_{\bar\pi}\bigl(\E_{\mu_\sigma}[g]\bigr).
\end{align*}
The first term measures fluctuations within a fixed
parity pattern and is controlled by pair moves.
The second measures fluctuations between parity
patterns and is controlled by combining local and
pair moves.

Combining these estimates gives
\begin{align*}
    \Var(H_n)
    &=\E[\Var(H_n\mid K)]
      +\Var_{\pi_{n,q}}(g)\\
    &\overset{\text{Lemma~\ref{lem:conditional-position-variance}}}{\le}
      \frac12n\ell^2+\Var_{\pi_{n,q}}(g)\\
    &=\frac12n\ell^2
      +\E_{\bar\pi}\bigl[\Var_{\mu_\sigma}(g)\bigr]
      +\Var_{\bar\pi}\bigl(\E_{\mu_\sigma}[g]\bigr)\\
    &\overset{\text{Lemmas~\ref{lem:fixed-parity}
                         and~\ref{lem:between-parities}}}{\le}
      \frac12n\ell^2
      +C\cE_{\rm pair}(g,g)
      +C\bigl\{\cE_{\rm loc}(g,g)+\cE_{\rm pair}(g,g)\bigr\}\\
    &\overset{\text{Lemma~\ref{lem:conditional-mean-energy}}}{\le}
      Cn\ell^2.
\end{align*}

The proof is guided by a change in perspective.
A standard approach starts with a given generator,
such as that of Glauber or Langevin dynamics, and follows
the sequence
\begin{align*}
    \text{Given generator}
    \;\longrightarrow\;
    \text{Poincar\'e inequality}
    \;\longrightarrow\;
    \text{Variance bound}.
\end{align*}
Here, we start from the function of interest:
\begin{align*}
    \text{Given function }g
    \;\longrightarrow\;
    \text{Dynamics with small }\cE(g,g)
    \;\longrightarrow\;
    \text{Poincar\'e inequality}.
\end{align*}
The central task is to design transitions that keep
the Dirichlet energy of $g$ small while allowing the
process to mix rapidly.

The paper is organized as follows.
Section~\ref{sec:count} studies the local and pair dynamics
and establishes both the Poincar\'e inequality and the
Dirichlet energy bound in \eqref{eq:Poincare}.
Section~\ref{sec:upper} bounds the first term in
\eqref{eq:upper_decomposition} and removes the upper
restriction on $\rho=n/q$, completing the proof of
Theorem~\ref{thm:main}.
Appendices~A and~B establish the parity and conditional
moment estimates used in the proof.

\section{A Poincar\'e inequality for multinomial counts}\label{sec:count}

In this section, we prove \eqref{eq:Poincare} following the intuition in Section \ref{ssec:proof_strategy}.

\subsection{Partition and count field}\label{ssec:setup}
Fix the partition $\mathcal C_1,\ldots,\mathcal C_q$ and
the connected graph $G$ from Section~1.

\begin{proposition}\label{prop:cell-path}
Under Assumption~\ref{ass:partition}, there exists a
path $T=(\{1,\ldots,q\},E)$ visiting every cell index exactly once
such that
\begin{align*}
    \iint d(x,y)^2\,\mu_a(\mathrm dx)\mu_b(\mathrm dy)
    \le9\ell^2
    \qquad\text{for every }\{a,b\}\in E.
\end{align*}
In particular, $T$ has maximum degree at most two.
\end{proposition}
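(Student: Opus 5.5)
Our plan is to use the classical fact that the cube of a connected graph is Hamiltonian, together with a triangle inequality for the root-mean-square distance between cell laws.

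\emph{Step 1: a metric on cells.} For $a,b\in[q]$ set
\begin{align*}
    D(a,b):=\Bigl(\iint d(x,y)^2\,\mu_a(\mathrm dx)\mu_b(\mathrm dy)\Bigr)^{1/2},
\end{align*}
the $L^2$ norm of $d(X,Y)$ for independent $X\sim\mu_a$ and $Y\sim\mu_b$. We claim that $D(a,c)\le D(a,b)+D(b,c)$. To see this, take $X\sim\mu_a$, $Y\sim\mu_b$, $Z\sim\mu_c$ independent. Then $d(X,Z)\le d(X,Y)+d(Y,Z)$ pointwise, and Minkowski's inequality in $L^2$ gives the claim, since the pairs $(X,Z)$, $(X,Y)$, $(Y,Z)$ each have the product law.

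Assumption~\ref{ass:partition} says $D(a,b)\le\ell$ for $\{a,b\}\in E(G)$. Iterating the triangle inequality, any $a,b$ at graph distance at most $3$ in $G$ satisfy $D(a,b)\le3\ell$, i.e.\ $D(a,b)^2\le9\ell^2$. We do not need $D(a,a)$ here; the diagonal bound is only used elsewhere.

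\emph{Step 2: a Hamiltonian path in $G^3$.} It suffices to produce a Hamiltonian path of $G^3$, meaning a linear ordering $v_1,\ldots,v_q$ of $[q]$ in which consecutive vertices are at $G$-distance at most $3$. Take $E=\{\{v_i,v_{i+1}\}\}$; this is a path visiting each index once, so its maximum degree is at most two.

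To build the ordering, we first pass to a spanning tree $S$ of $G$; distances in $S$ dominate those in $G$, so it suffices to work in $S$. We then prove by induction on $|S|$ the following standard strengthened statement (Sekanina/Karaganis): for every tree $S$ and every edge $\{u,w\}$ of $S$, there is a Hamiltonian path of $S^3$ starting at $u$ and ending at $w$. The base cases $|S|\le2$ are trivial.

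For the inductive step, remove the edge $uw$, which splits $S$ into subtrees $S_u\ni u$ and $S_w\ni w$.
\begin{itemize}
\item[(i)] If $S_u=\{u\}$, set $P_u=(u)$. Otherwise pick a neighbor $u'$ of $u$ in $S_u$ and apply induction to $(S_u,\{u,u'\})$. This gives a path $P_u$ from $u$ to $u'$ covering $S_u$.
\item[(ii)] Symmetrically, obtain a path $P_w$ from some neighbor $w'$ of $w$ to $w$ covering $S_w$ (or $P_w=(w)$ if $S_w=\{w\}$).
\item[(iii)] Concatenate $P_u$ and $P_w$. The junction joins $u'$ to $w'$, and $d_S(u',w')\le d(u',u)+1+d(w,w')\le3$.
\end{itemize}
The result is a Hamiltonian path of $S^3$ from $u$ to $w$. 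If $q=1$, $E$ is empty and the claim is vacuous.

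\emph{Main obstacle.} The delicate point is the inductive bookkeeping in Step 2. The plain statement ``$S^3$ has a Hamiltonian path'' does not induct; we must fix the endpoints to be an edge so that the junction distance stays at most $3$. The measure-theoretic part of Step 1 is routine once $D$ is seen to be a pseudometric via Minkowski's inequality.
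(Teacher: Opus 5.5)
Your proof is correct and follows the paper's route: a Hamiltonian path in $G^3$ combined with a triangle inequality for the root-mean-square distance between cell laws. Your Minkowski step gives the same bound $r^2\ell^2\le 9\ell^2$ as the paper's Cauchy--Schwarz step. The only difference is that the paper cites Karaganis for the Hamiltonian path in $G^3$, whereas you prove it directly by passing to a spanning tree and inducting with prescribed adjacent endpoints; that induction is valid and makes the argument self-contained.
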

\begin{proof}
If $q=1$, take the path consisting of the single vertex.
Suppose $q\ge2$.
The graph $G^3$ has the same vertices as $G$, with an edge between
two distinct vertices whenever their distance in $G$ is at most three.
By Karaganis \cite[p.~295, Theorem]{Karaganis1968}, $G^3$ has a
Hamiltonian path, that is, a path visiting every vertex exactly once.
Let $T=(\{1,\ldots,q\},E)$ be such a path.
For each $\{a,b\}\in E$, choose a path
$a=b_0,b_1,\ldots,b_r=b$ in $G$ with $r\le3$.
Sample independent points $Z_j\sim\mu_{b_j}$,
$0\le j\le r$. The triangle and Cauchy--Schwarz
inequalities give
\begin{align*}
    \E[d(Z_0,Z_r)^2]
    &\le r\sum_{j=1}^r\E[d(Z_{j-1},Z_j)^2]\\
    &\le r^2\ell^2\le9\ell^2.
\end{align*}
\end{proof}

It therefore suffices to work with the path $T$, which has
$q-1$ edges and maximum degree at most $2$. Independent
points from cells joined by an edge of $T$ have mean
squared distance at most $9\ell^2$; the factor $9$ will
be absorbed into the universal constants.

In this section, assume that
\begin{align*}
    \rho_*\le\rho:=\frac nq\le2\rho_*,
\end{align*}
where $\rho_*$ is a sufficiently large universal constant.
The restriction to this ratio range will be removed
in Section~\ref{sec:upper}. Constants in this section may depend on
the fixed choice of $\rho_*$, but not on $n$, $q$, or $\ell$.

We repeat the setup in Section \ref{ssec:proof_strategy}.
Let $K_a$ be the number of sample points in $\mathcal C_a$, and set
$K=(K_1,\ldots,K_q)$. Since the sample points are independent and each cell has probability $1/q$, $K$ has the multinomial law
\begin{align*}
    \pi_{n,q}
    =\operatorname{Mult}(n;q^{-1},\ldots,q^{-1})
\end{align*}
on the state space
\begin{align*}
    \Omega_{n,q}
    =\left\{k\in\mathbb Z_{\ge0}^q:
      \sum_{a=1}^q k_a=n\right\}.
\end{align*}
Cell indices $a,b$ range over $\{1,\ldots,q\}$, while
$k\in\Omega_{n,q}$ denotes a count configuration. Let
$e_a\in\mathbb R^q$ be the $a$th standard basis vector.

\subsection{Dirichlet energy bound}
We recall the generators in Section \ref{ssec:proof_strategy}.
For $f:\Omega_{n,q}\to\mathbb R$ and the path $T=(\{1,\ldots,q\},E)$ defined in proposition \ref{prop:cell-path}, set
\begin{align*}
    (\mathcal L_{\rm loc}f)(k)
    &=\sum_{\{a,b\}\in E}
    \bigl[k_a\{f(k-e_a+e_b)-f(k)\}
    +k_b\{f(k-e_b+e_a)-f(k)\}\bigr],\\
    (\mathcal L_{\rm pair}f)(k)
    &=\frac1q\sum_{a,b\in[q]}(k_a)_2
    \{f(k-2e_a+2e_b)-f(k)\},
    \qquad (k)_2=k(k-1).
\end{align*}
The local sum runs over unordered edges, while the pair sum runs
over ordered pairs of cells. Moves
producing a negative coordinate have rate zero.

For a generator $\mathcal L$ with transition rates $c(k,k')$,
define its Dirichlet form with respect to $\pi_{n,q}$ by
\begin{align*}
    \cE_{\mathcal L}(f,f)
    :=\frac12\E_{\pi_{n,q}}\left[
    \sum_{k'\in\Omega_{n,q}}
    c(K,k')\{f(k')-f(K)\}^2\right].
\end{align*}
Write $\cE_{\rm loc}$ and $\cE_{\rm pair}$ for the forms associated with $\mathcal L_{\rm loc}$ and $\mathcal L_{\rm pair}$, respectively.

Both generators are reversible with respect to $\pi_{n,q}$.
Indeed, the multinomial weights
\begin{align*}
    \pi_{n,q}(k)=\frac{n!}{q^n\prod_{a=1}^q k_a!}
\end{align*}
satisfy the detailed balance identities
\begin{align}\label{eq:reversibility}
    \pi_{n,q}(k)\,k_a
    &=\pi_{n,q}(k-e_a+e_b)\,(k_b+1),\\
    \pi_{n,q}(k)\,\frac{(k_a)_2}{q}
    &=\pi_{n,q}(k-2e_a+2e_b)\,
      \frac{(k_b+2)_2}{q}\notag
\end{align}
for every allowed local and pair move, respectively.

Here we prove the second inequality in \eqref{eq:Poincare}.

\begin{lemma}[Dirichlet energy of the conditional mean]
\label{lem:conditional-mean-energy}
Let $g(k):=\E[H_n\mid K=k]$. For every transition $k\to k'$
with positive rate under either $\mathcal L_{\rm loc}$ or
$\mathcal L_{\rm pair}$,
\begin{align*}
    |g(k')-g(k)|\le C\ell.
\end{align*}
Consequently,
\begin{align*}
    \cE_{\rm loc}(g,g)+\cE_{\rm pair}(g,g)\le C n\ell^2.
\end{align*}
\end{lemma}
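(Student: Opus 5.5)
Given $K=k$, the positions are independent with $X_i\sim\mu_a$ for the $k_a$ indices assigned to cell $a$. Because $H_n$ is symmetric in the points, $g(k)$ is the expected minimum matching cost of a configuration made of $k_a$ independent $\mu_a$-points in each cell $a$. The plan is to couple the configurations for $k$ and $k'$ so that they differ in one or two points. We then bound the change in the optimal matching pathwise and take expectations, using $\E[d]\le(\E[d^2])^{1/2}$ together with Assumption~\ref{ass:partition} and Proposition~\ref{prop:cell-path}.

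\textbf{Local move} $k'=k-e_a+e_b$ with $\{a,b\}\in E$. Realize both configurations on one space: they share all points except one. That point is $Y\sim\mu_a$ in the configuration for $k$ and $Y'\sim\mu_b$, independent of $Y$, in the one for $k'$. Take an optimal matching for $k$ and replace $Y$ by $Y'$ in it. The triangle inequality gives $H(k')\le H(k)+d(Y,Y')$, and the symmetric argument gives the reverse bound. Hence
\begin{align*}
|g(k')-g(k)|\le\E\, d(Y,Y')\le 3\ell
\end{align*}
by Proposition~\ref{prop:cell-path}.

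\textbf{Pair move} $k'=k-2e_a+2e_b$, with $a,b$ arbitrary. The shared points are the same, while $Y_1,Y_2\sim\mu_a$ appear in $k$ and $Y_1',Y_2'\sim\mu_b$ appear in $k'$, all independent. Let $\mathcal P$ be an optimal matching for $k$.
\begin{itemize}
\item If $Y_1$ and $Y_2$ are matched to each other in $\mathcal P$, replace that pair by $\{Y_1',Y_2'\}$. The cost changes by $d(Y_1',Y_2')-d(Y_1,Y_2)\le d(Y_1',Y_2')$.
\item Otherwise $Y_1$ is matched to some $Z_1$ and $Y_2$ to some $Z_2$. Pair $Z_1$ with $Z_2$ and $Y_1'$ with $Y_2'$. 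The cost changes by at most
\begin{align*}
d(Z_1,Z_2)-d(Z_1,Y_1)-d(Z_2,Y_2)+d(Y_1',Y_2')\le d(Y_1,Y_2)+d(Y_1',Y_2').
\end{align*}
\end{itemize}
In either case $H(k')\le H(k)+d(Y_1,Y_2)+d(Y_1',Y_2')$, and the reverse inequality follows by symmetry. Both distances are within a single cell, so each has expectation at most $\ell$ by Assumption~\ref{ass:partition}, and $|g(k')-g(k)|\le 2\ell$. The key point is that this bound does not depend on how far apart $a$ and $b$ are. This pathwise coupling is the only real step and I expect it to work as stated. One must only note that the bound holds regardless of whether $\mathcal P$ depends on the $Y$'s, since it is deterministic given the points.

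\textbf{Energy bound.} Since every jump changes $g$ by at most $C\ell$,
\begin{align*}
\cE(g,g)\le C\ell^2\,\E_{\pi_{n,q}}\Big[\sum_{k'}c(K,k')\Big].
\end{align*}
For $\mathcal L_{\rm loc}$ the total rate is $\sum_{\{a,b\}\in E}(K_a+K_b)\le 2n$, because $T$ has maximum degree two. For $\mathcal L_{\rm pair}$ the total rate is $\frac1q\sum_{a,b}(K_a)_2=\sum_a (K_a)_2$. Under the multinomial law $\E(K_a)_2=n(n-1)/q^2$, so the expected total rate is at most $n^2/q=n\rho\le 2\rho_* n$. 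Since $\rho_*$ is fixed, this gives $\cE_{\rm loc}(g,g)+\cE_{\rm pair}(g,g)\le Cn\ell^2$.
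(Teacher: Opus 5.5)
Your proposal is correct and follows essentially the same route as the paper: the same coupling that keeps the shared points and resamples the transferred ones, the same pathwise bounds $|H'-H|\le d(Y,Y')$ and $|H'-H|\le d(Y_1,Y_2)+d(Y_1',Y_2')$, and the same total-rate count using maximum degree two and $\E_{\pi_{n,q}}(K_a)_2=n(n-1)/q^2$ together with $\rho\le2\rho_*$. The only cosmetic difference is that you bound $|g(k')-g(k)|$ through first moments and Cauchy--Schwarz, whereas the paper bounds $\E[(H'-H)^2]$ and then applies Jensen.
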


\begin{proof}
Conditional on the count vector, the point configuration can
be generated by sampling independently within each cell, with the conditional law
$\mu(\,\cdot\mid\mathcal C_a)$ in cell $\mathcal C_a$.
We couple the configurations with counts $k$ and $k'$ by keeping
all unchanged points and replacing the transferred points by
independent points with the conditional law of the destination cell.

Let $H_n$ be the original cost and $H_n'$ be the cost after making the move. For a local move, a point $x\in\mathcal C_a$ is replaced by
$y\in\mathcal C_b$, where $\{a,b\}\in E$.
Replacing $x$ by $y$ in an optimal matching and applying
the triangle inequality gives
$H_n'-H_n\le d(x,y)$.
Reversing the comparison, we obtain
\begin{align*}
    |H_n'-H_n|\le d(x,y).
\end{align*}
The transferred point is selected by its index within
the source cell, independently of its position. Thus $x$
and $y$ are independent with laws $\mu_a$ and $\mu_b$,
and Proposition~\ref{prop:cell-path} gives
\begin{align*}
    \E[(H_n'-H_n)^2]\le\E[d(x,y)^2]\le9\ell^2.
\end{align*}

For a pair move, two points $x_1,x_2\in\mathcal C_a$ are replaced
by $y_1,y_2\in\mathcal C_b$. If $x_1,x_2$ are matched together
in an optimal matching of the original configuration, replace
their edge by $\{y_1,y_2\}$. Then
\begin{align*}
    H_n'-H_n
    &\le d(y_1,y_2)-d(x_1,x_2).
\end{align*}
Otherwise, match their former partners to each other and match
$y_1$ to $y_2$. The triangle inequality gives
\begin{align*}
    H_n'-H_n
    &\le d(x_1,x_2)+d(y_1,y_2).
\end{align*}
Applying the same argument with the two configurations interchanged
yields
\begin{align*}
    |H_n'-H_n|\le d(x_1,x_2)+d(y_1,y_2).
\end{align*}
The source points and the destination
points are sampled independently with corresponding law in the cell. The within-cell
second-moment bounds therefore give
\begin{align*}
    \E[(H_n'-H_n)^2]
    &\le2\E[d(x_1,x_2)^2]+2\E[d(y_1,y_2)^2]\\
    &\le4\ell^2.
\end{align*}
For either type of move, Jensen's inequality now yields
\begin{align*}
    |g(k')-g(k)|^2
    =|\E[H_n'-H_n]|^2
    \le\E[(H_n'-H_n)^2]
    \le C\ell^2.
\end{align*}

For the local dynamics, the total rate of moves out of cell $a$
is at most $2k_a$, since the path $T$ has maximum degree $2$. Hence the
total local jump rate is at most
\begin{align*}
    \sum_a2k_a=2n.
\end{align*}
For the pair dynamics, a move from cell $a$ to a distinct cell $b$
has rate $(k_a)_2/q$, which vanishes when $k_a<2$. Summing over
source and destination cells gives
\begin{align*}
    \frac1q\sum_{a\ne b}(k_a)_2
    =\frac{q-1}{q}\sum_a(k_a)_2
    \le \sum_a(k_a)_2.
\end{align*}
By the multinomial moment identity,
$\E_{\pi_{n,q}}(K_a)_2=n(n-1)/q^2$. Summing over $a$ gives
\begin{align*}
    \E_{\pi_{n,q}}\sum_a(K_a)_2=\frac{n(n-1)}q.
\end{align*}

Since every allowed transition satisfies
$|g(k')-g(k)|^2\le C\ell^2$, the definition of the Dirichlet
forms yields
\begin{align*}
    \cE_{\rm loc}(g,g)+\cE_{\rm pair}(g,g)
    &\le \frac{C\ell^2}{2}
    \left(2n+\frac{n(n-1)}q\right)
    \le C n\ell^2,
\end{align*}
using the upper bound on $n/q$.
\end{proof}

\subsection{Count inequality and variance decomposition}

It remains to establish the Poincar\'e inequality for the combined dynamics.

\begin{proposition}\label{prop:count-gap}
Under the assumptions of Section \ref{ssec:setup}, for every $f:\Omega_{n,q}\to\mathbb R$,
\begin{align*}
    \Var_{\pi_{n,q}}(f)
    \le C\bigl\{
        \cE_{\rm loc}(f,f)+\cE_{\rm pair}(f,f)
    \bigr\}.
\end{align*}
\end{proposition}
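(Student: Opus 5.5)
The plan follows the decomposition announced in Section~\ref{ssec:proof_strategy}. Write $\sigma=K\bmod 2$ and split
\begin{align*}
    \Var_{\pi_{n,q}}(f)
    =\E_{\bar\pi}\bigl[\Var_{\mu_\sigma}(f)\bigr]
    +\Var_{\bar\pi}\bigl(\E_{\mu_\sigma}[f]\bigr).
\end{align*}
We bound the first term by $C\cE_{\rm pair}(f,f)$ and the second by $C\{\cE_{\rm loc}(f,f)+\cE_{\rm pair}(f,f)\}$.

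\textbf{Within a parity pattern.} Fix $\sigma$ with $|\sigma|:=\sum_a\sigma_a$ odd cells, and write $k=\sigma+2m$. Then $m\in\mathbb Z_{\ge0}^q$ with $\sum_a m_a=(n-|\sigma|)/2=:N$, and pair moves act as single-particle moves $m\to m-e_a+e_b$ at rate $(2m_a+\sigma_a)(2m_a+\sigma_a-1)/q$. Under $\mu_\sigma$ the law of $m$ is proportional to $\prod_a 1/(2m_a+\sigma_a)!$. This is a product measure conditioned on its sum, and the pair dynamics is a mean-field zero-range-type process (every site talks to every site with rate $1/q$).

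I would prove a spectral gap of order one, uniformly in $N$ and $q$ when $N/q$ is of order $\rho$, by comparison with the Bernoulli--Laplace / mean-field zero range results. The concrete route I would try first is a martingale (Lu--Yau style) induction on $q$: condition on $m_q$, use that the rate $\asymp m_a^2/q$ grows superlinearly, and apply the mean-field structure.

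A cruder but perhaps simpler alternative exploits the fact that the rate $(k_a)_2/q\ge 2m_a\cdot(\text{const})/q$ when $m_a\ge1$, so it dominates the rate $m_a/q$ of the independent-particle mean-field walk. That walk has stationary law multinomial in $m$, with gap $1$. Its stationary law differs from $\mu_\sigma$, however, so this comparison requires density bounds. I therefore expect the induction on $q$ to be cleaner.

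\textbf{Between parity patterns.} The pattern $\sigma$ has law $\bar\pi$. I expect Appendix~A to show that $\bar\pi$ is close to i.i.d.\ Bernoulli($\approx 1/2$) conditioned on $|\sigma|\equiv n \pmod 2$, when $\rho$ is large. A local move along an edge $\{a,b\}$ of $T$ flips $\sigma_a$ and $\sigma_b$ simultaneously. The induced dynamics on patterns, namely ``flip both ends of an edge of the path'', is a spin-flip dynamics on a path with near-product measure. It has gap order one, since edge flips on a path generate all even-flip sets and can be analyzed as independent coordinates via the change of variables $\tau_j=\sigma_1+\dots+\sigma_j \bmod 2$.

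The step needing care is to bound $\Var_{\bar\pi}(F)$, with $F(\sigma):=\E_{\mu_\sigma}f$, by the projected energy $\sum \bar\pi(\sigma)\,\text{rate}\,(F(\sigma')-F(\sigma))^2$. The difference $F(\sigma')-F(\sigma)$ compares averages under two different conditional measures. I would couple $\mu_\sigma$ and $\mu_{\sigma'}$ by performing one local move, which maps $k$ to $k-e_a+e_b$. The pushforward of $\mu_\sigma$ weighted by the rate $k_a$ is not exactly $\mu_{\sigma'}$, and the discrepancy is a change of measure with bounded density ratio (using conditional moment estimates from Appendix~B). The resulting error is $\Var_{\mu_{\sigma'}}$-type, which the fixed-parity step controls via $\cE_{\rm pair}$. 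Summing gives the bound $C\{\cE_{\rm loc}+\cE_{\rm pair}\}$.

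\textbf{Main obstacle.} The main obstacle is the uniform gap for the pair dynamics within a fixed parity pattern, i.e.\ a mean-field zero-range process with non-product (factorial) stationary weights. It must be proved uniformly in $q$, which is presumably where the lower bound $\rho\ge\rho_*$ is needed.
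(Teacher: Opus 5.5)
Your outline has the same architecture as the paper: the parity decomposition, edge-bit (partial-sum) coordinates for the path flips, and a one-local-move coupling between reweighted conditional laws. The argument breaks, though, exactly at ``summing gives the bound.'' For each edge $e=\{a,b\}$ the coupling is exact only between size-biased laws. So $F(\tau_e\sigma)-F(\sigma)$ carries an error of the form $\Cov_{\mu_\sigma}(f,K_a+K_b)/m_{\sigma,e}$, with $m_{\sigma,e}=\E_{\mu_\sigma}[K_a+K_b]$, plus the analogous term at $\tau_e\sigma$. (The reweighting density $(k_a+k_b)/m_{\sigma,e}$ is not bounded uniformly in $q$; only its $L^2$ size is controlled.) If you estimate each edge separately, Cauchy--Schwarz gives at best $\Var_{\mu_\sigma}(f)\Var_{\mu_\sigma}(K_a+K_b)/m_{\sigma,e}^2\asymp\Var_{\mu_\sigma}(f)/\rho$, and this is sharp for a single edge (take $f=K_a$). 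Summed over the $q-1$ edges of $T$, this gives $C(q/\rho)\,\E_{\bar\pi}[\Var_{\mu_\sigma}(f)]$, which is not uniform in $q$.

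The missing idea is to control all edges jointly. Apply the fixed-parity Poincar\'e inequality to linear statistics:
\begin{align*}
    \Var_{\mu_\sigma}\Bigl(\sum_a u_aK_a\Bigr)
    \le\frac1q\sum_{a,b}\E_{\mu_\sigma}[(K_a)_2](u_b-u_a)^2
    \le C\rho^2\sum_a u_a^2.
\end{align*}
Then use Cauchy--Schwarz with $u_a=v_a:=\Cov_{\mu_\sigma}(f,K_a)$, namely $(\sum_a v_a^2)^2=\Cov_{\mu_\sigma}(f,\sum_a v_aK_a)^2\le C\rho^2\Var_{\mu_\sigma}(f)\sum_a v_a^2$. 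The resulting Bessel-type bound $\sum_a\Cov_{\mu_\sigma}(f,K_a)^2\le C\rho^2\Var_{\mu_\sigma}(f)$ combines with maximum degree two and $m_{\sigma,e}\ge c\rho$ (Appendix~B). Together these make the total error a single copy of $C\,\E_{\bar\pi}[\Var_{\mu_\sigma}(f)]\le C\cE_{\rm pair}(f,f)$.

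The parity-pattern gap is also not justified by ``near-product, analyze the coordinates independently.'' We have $\bar\pi(\sigma)\propto Z_{q,|\sigma|}$ with $Z_{q,r+2}/Z_{q,r}=1-A_{q,r}$. Here $A_{q,r}$ is the probability that two given even cells are empty, which is positive and does not decay in $q$. So patterns whose odd-cell counts differ by order $q$ have probability ratios exponentially small in $q$. No bounded-perturbation argument from a product measure is therefore uniform in $q$, and the edge bits are genuinely dependent. The paper instead verifies Dobrushin's uniqueness condition for the edge bits, which needs two inputs:
\begin{itemize}
\item influence at most $Ce^{-c\rho}$ from the at most two adjacent edges, via $A_{q,r}\le Ce^{-c\rho}$;
\item influence at most $Ce^{-c\rho}/q$ from each of the roughly $q$ disjoint edges, via $|A_{q,r+2}-A_{q,r}|\le Ce^{-c\rho}/q$, proved in Appendix~A by an Efron-monotonicity coupling.
\end{itemize}
That $1/q$ decay is the real quantitative input, and this step is where $\rho\ge\rho_*$ is used. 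By contrast, the within-parity gap you single out as the main obstacle needs no largeness of $\rho$. For mean-field zero range with uniform destinations, \cite[Corollary~4]{HermonSalez2019} gives a gap at least $\min_j[c_{\sigma_a}(j+1)-c_{\sigma_a}(j)]$. Here the increments $8j+2$ and $8j+6$ are at least $2$, uniformly in $q$ and $\sigma$. As you suspected, your fallback of dominating by independent walks would fail, since the density of $\mu_\sigma$ relative to a multinomial is not bounded uniformly in $q$.
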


To prove Proposition~\ref{prop:count-gap}, we condition on the
parity pattern $\sigma=(K_a\bmod2)_{a=1}^q$ introduced above.
Let $\bar\pi$ denote its law, let $\mu_\sigma$ denote the conditional
law of $K$ given $\sigma$, and set
\begin{align*}
    F(\sigma):=\E_{\mu_\sigma}[f].
\end{align*}
The law of total variance gives
\begin{align*}
    \Var_{\pi_{n,q}}(f)
    =
    \E_{\bar\pi}\bigl[\Var_{\mu_\sigma}(f)\bigr]
    +\Var_{\bar\pi}(F).
\end{align*}
We establish the two estimates
\begin{align*}
    \E_{\bar\pi}\bigl[\Var_{\mu_\sigma}(f)\bigr]
    &\le C\,\cE_{\rm pair}(f,f),\\
    \Var_{\bar\pi}(F)
    &\le C\bigl\{
        \cE_{\rm loc}(f,f)+\cE_{\rm pair}(f,f)
    \bigr\},
\end{align*} 
which together prove the proposition.

\begin{lemma}\label{lem:fixed-parity}
For every $f:\Omega_{n,q}\to\mathbb R$,
\begin{align*}
    \E_{\bar\pi}\bigl[\Var_{\mu_\sigma}(f)\bigr]
    \le C\,\cE_{\rm pair}(f,f).
\end{align*}
\end{lemma}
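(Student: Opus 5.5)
Within a fixed parity pattern $\sigma$, write $k_a=\sigma_a+2m_a$ with $m_a\ge0$. The constraint becomes $\sum_a m_a=N_\sigma:=(n-|\sigma|)/2$, where $|\sigma|=\sum_a\sigma_a$. The conditional law is
\begin{align*}
    \mu_\sigma(k)\propto\prod_a\frac{1}{(\sigma_a+2m_a)!}.
\end{align*}
A pair move $k\to k-2e_a+2e_b$ is exactly the move $m\to m-e_a+e_b$, at rate $(k_a)_2/q$. So the plan is to prove, for each $\sigma$, a Poincar\'e inequality
\begin{align*}
    \Var_{\mu_\sigma}(f)\le C\,\cE^{\sigma}_{\rm pair}(f,f),
\end{align*}
where $\cE^\sigma_{\rm pair}$ is the pair Dirichlet form computed with respect to $\mu_\sigma$. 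Averaging over $\bar\pi$ then gives the lemma, because pair moves preserve $\sigma$: we have $\pi_{n,q}=\sum_\sigma\bar\pi(\sigma)\mu_\sigma$, so $\cE_{\rm pair}=\E_{\bar\pi}[\cE^\sigma_{\rm pair}]$ exactly. The $\mu_\sigma$-chain is a mean-field (complete graph) zero-range–type process on $q$ sites with $N_\sigma$ particles. Its rate from site $a$ to any fixed $b$ is $r_{\sigma_a}(m_a)/q$, with $r_s(m)=(s+2m)(s+2m-1)$, and it is reversible for a product measure conditioned on its total. The bound must be uniform in $q$ and $N_\sigma$; uniformity in $\sigma$ is not needed, since we only average.

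To prove the per-pattern inequality, I will compare with the \emph{unconditioned} product measure and use a standard approach for mean-field exchange dynamics. Concretely, the single-site marginal $\nu_s^\lambda(m)\propto\lambda^m/(s+2m)!$ has jump rate $r_s(m)$ growing quadratically, at least linearly, in $m$. So the birth–death chain on $\mathbb Z_{\ge0}$ with death rate $r_s(m)$ and birth rate $\lambda$ has a spectral gap bounded below uniformly in $\lambda$ in a compact range. Moreover, $r_s(m+1)-r_s(m)\ge c>0$, which is the condition of Caputo's/Morris's spectral gap theorem for zero-range processes with increasing rates on the complete graph. More simply, I would use a direct martingale/induction argument (Caputo's approach, or the Lu–Yau type induction on $q$). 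Condition on $m_q$. The remaining $q-1$ sites form the same type of system with $N-m_q$ particles, and their spectral gap is bounded by induction. The variance of the conditional expectation as a function of $m_q$ is controlled via the one-site birth–death gap. Here the effective birth rate, i.e. the rate at which the other sites send mass to site $q$, is $\frac1q\sum_{a<q}r(m_a)$, which concentrates around $c\rho^2/q\cdot q\approx$ const. Because pair rates are $1/q$ per destination, the complete-graph normalization gives a gap of order one rather than order $1/q$. An alternative shortcut is a coupling/path argument. Each pair-move edge $(a,b)$ has conductance $\pi\cdot(k_a)_2/q$, and the canonical paths on the complete graph have length one. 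This reduces to the variance of a single exchange, $\Var(f)\le \frac{C}{q}\sum_{a,b}\E[\ldots]$, essentially the Bernoulli–Laplace / multinomial gap. The weights $(k_a)_2$ then need to be compared with $k_a$-type rates, which is where $k_a\ge2$ matters.

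The main obstacle is the sites with small $m_a$. When $k_a\in\{0,1\}$, site $a$ cannot emit, so the rate is not uniformly comparable to the rate of the standard multinomial "move one unit" chain. The rates are also quadratic, so the chain is not the classical one with known gap. I would handle this by the comparison just described. I would first prove the gap for the unconditioned measure: $\mu_\sigma$ is a product of the $\nu^\lambda_{\sigma_a}$ conditioned on the sum, and the rate satisfies $r(m)\ge c\,m$ for $m\ge1$. Under such product measures with rates increasing at least linearly, the mean-field exchange gap is $\ge c$ uniformly; I would cite or reprove this via the induction on $q$ above. The needed input is that the conditional one-site marginals under $\mu_\sigma$ have uniformly sub-Gaussian/Poisson-like tails with mean of order $\rho_*$. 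This is presumably what the conditional moment estimates in Appendix B supply, and I would invoke them there. For the induction to close, one must check that $\E[r(m_a)]\asymp\rho^2$ uniformly for typical $\sigma$; atypical $\sigma$ (e.g.\ $|\sigma|$ very close to $n$) have negligible $\bar\pi$-mass only if the inequality can tolerate them, so I would restrict to $\sigma$ with $|\sigma|\le n/2$, say, using the large-deviation parity estimates of Appendix A. For the rest I would fall back on a crude bound: $\Var_{\mu_\sigma}(f)$ is bounded by a polynomial in $n$ times the Dirichlet form via canonical paths, and this is multiplied by an exponentially small $\bar\pi(\sigma)$. That fallback is the step I am least sure about.
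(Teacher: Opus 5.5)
Your sector reduction matches the paper's. Pair moves preserve $\sigma$, so $\cE_{\rm pair}=\E_{\bar\pi}[\cE^\sigma_{\rm pair}]$. In the variables $m_a=(k_a-\sigma_a)/2$, each sector carries a mean-field zero-range process with site-dependent rates $r_{\sigma_a}$ and uniform destination.

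You also named the decisive condition. Concretely, $r_0(m+1)-r_0(m)=8m+2$ and $r_1(m+1)-r_1(m)=8m+6$, both at least $2$ for every $m\ge0$. The paper's entire proof feeds this into \cite[Corollary~4]{HermonSalez2019}. That gives spectral gap at least $2$, hence $\Var_{\mu_\sigma}(f)\le\frac12\cE^\sigma_{\rm pair}(f,f)$ for every $\sigma$, $q$ and particle number, and the lemma follows by averaging.

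You need that site-inhomogeneous version, because in a mixed sector the even and odd cells have different rate functions. A homogeneous mean-field result does not apply directly, and Morris's constant-rate theorem is not relevant. Reproving the bound by a Lu--Yau induction is also not the easier route you suggest. Controlling the covariance terms for heterogeneous sites is the difficulty that reference overcomes.

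The genuine error is your claim that uniformity in $\sigma$ is not needed ``since we only average,'' together with the fallback you plan for atypical sectors. Pair moves never change $\sigma$. So test with $f$ supported on a single sector $\sigma_0$: this gives $\E_{\bar\pi}[\Var_{\mu_\sigma}(f)]=\bar\pi(\sigma_0)\Var_{\mu_{\sigma_0}}(f)$ and $\cE_{\rm pair}(f,f)=\bar\pi(\sigma_0)\cE^{\sigma_0}_{\rm pair}(f,f)$, and the weight $\bar\pi(\sigma_0)$ cancels. The lemma is therefore \emph{equivalent} to a Poincar\'e constant bounded uniformly over every $\sigma$ in the support. A polynomial-in-$n$ constant on a sector of exponentially small $\bar\pi$-mass cannot be absorbed, because that mass multiplies both sides.

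The worry is also misplaced. Here $|\sigma|\le q\le n/\rho_*$, so $|\sigma|$ is never close to $n$, and the Appendix~B moment bounds hold for all $\sigma$.

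Your canonical-path shortcut fails as stated too. Paths between configurations in the state space have length of order $\|m-m'\|_1$, not one; only the site graph is complete.

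The fix is to drop the typical/atypical split, the moment inputs, and the induction sketch. The increment bound holds for every $\sigma$, and that is exactly what makes the cited gap estimate uniform.
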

\begin{proof}
Fix a parity pattern $\sigma$ with $\bar\pi(\sigma)>0$, and write
\begin{align*}
    K_a=2J_a+\sigma_a,
    \qquad
    \sum_aJ_a=\frac{n-\sum_a\sigma_a}{2}.
\end{align*}
The multinomial weights give
\begin{align*}
    \mu_\sigma(2j+\sigma)
    =\frac1{Z_\sigma}\prod_{a=1}^q
      \frac1{(2j_a+\sigma_a)!},
\end{align*}
on this fixed-sum state space, where $Z_\sigma$ is the normalizing
constant.

A pair move from $a$ to $b$ changes $J$ to $J-e_a+e_b$ at rate
$c_{\sigma_a}(J_a)/q$, where
\begin{align*}
    c_0(j)=2j(2j-1),
    \qquad
    c_1(j)=2j(2j+1).
\end{align*}

In our process, a pair leaves cell $a$ at rate $c_{\sigma_a}(j_a)$
and chooses its destination uniformly among the $q$ cells.
For this uniform choice of destination,
\cite[Corollary~4]{HermonSalez2019} gives spectral gap at least
$\lambda$ provided that
\begin{align*}
    c_{\sigma_a}(j+1)-c_{\sigma_a}(j)\ge\lambda
    \qquad\text{for every }a\in[q]\text{ and }j\ge0.
\end{align*}
Here
\begin{align*}
    c_0(j+1)-c_0(j)&=8j+2,\\
    c_1(j+1)-c_1(j)&=8j+6.
\end{align*}
Thus the spectral gap is at least $2$, uniformly over all
parity patterns and numbers of cells.

Therefore, if we define
\begin{align*}
    \cE_{{\rm pair},\sigma}(f,f)
    :=\frac{1}{2q}
    \E_{\mu_\sigma}\left[
        \sum_{a,b\in[q]}(K_a)_2
        \bigl\{f(K-2e_a+2e_b)-f(K)\bigr\}^2
    \right],
\end{align*}
we have
\begin{align}\label{eq:fixed_parity_poincare}
    \Var_{\mu_\sigma}(f)
    \le \frac12\,\cE_{{\rm pair},\sigma}(f,f).
\end{align}
Here $\cE_{{\rm pair},\sigma}$ is the pair Dirichlet form with expectation taken under $\mu_\sigma$. Averaging over $\sigma$ with respect to $\bar\pi$ gives
\begin{align*}
    \E_{\bar\pi}\bigl[\Var_{\mu_\sigma}(f)\bigr]
    &\le \frac12\,\E_{\bar\pi}
        \bigl[\cE_{{\rm pair},\sigma}(f,f)\bigr]
     = \frac12\,\cE_{\rm pair}(f,f),
\end{align*}
which proves the lemma.
\end{proof}

The following lemma is the main difficulty of this paper
\begin{lemma}\label{lem:between-parities}
Under the assumptions of Proposition~\ref{prop:count-gap}, for every
$f:\Omega_{n,q}\to\mathbb R$ and
$F(\sigma):=\E_{\mu_\sigma}[f]$,
\begin{align*}
    \Var_{\bar\pi}(F)
    \le C\bigl\{
        \cE_{\rm loc}(f,f)+\cE_{\rm pair}(f,f)
    \bigr\}.
\end{align*}
\end{lemma}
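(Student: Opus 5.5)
We will compare the variance of $F$ under $\bar\pi$ with a Dirichlet form for a Markov chain on parity patterns, and then bound that form by the count dynamics. A local move across an edge $\{a,b\}\in E$ flips $\sigma_a$ and $\sigma_b$ simultaneously. Since $\sum_a\sigma_a\equiv n \pmod 2$ is fixed, $\bar\pi$ lives on parity patterns with fixed total parity. We first describe $\bar\pi$ concretely. Under the multinomial law the $K_a$ are Poissonized-independent, so each $\sigma_a$ is approximately an independent Bernoulli with $\mathbb P(\sigma_a=1)=(1-e^{-2\rho})/2$, conditioned on the parity of the sum and reweighted by the fixed-sum conditioning. Since $\rho\ge\rho_*$ is large, this success probability is close to $1/2$, and $\bar\pi$ should be comparable, with bounded density ratio per flip, to the uniform measure on the even/odd hypercube. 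The parity estimates promised in Appendix A are exactly what we would invoke here.

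\emph{Step 1: Poincar\'e inequality on parity patterns.} Consider the chain on $\sigma$ that flips $\sigma_a,\sigma_b$ at rate one for each edge $\{a,b\}\in E$. This is a flip-pair (Kawasaki-type) dynamics on a path, and its gap is only of order $q^{-2}$, which is useless. We therefore use instead the chain that flips an arbitrary pair $\{a,b\}$ at rate $1/q$. For a near-product measure on the hypercube with fixed parity, this chain has gap bounded below uniformly in $q$, by comparison with the independent-flip chain. This gives
\begin{align*}
    \Var_{\bar\pi}(F)\le \frac{C}{q}\sum_{a,b}\E_{\bar\pi}\bigl[\{F(\sigma^{ab})-F(\sigma)\}^2\bigr],
\end{align*}
where $\sigma^{ab}$ denotes $\sigma$ with coordinates $a$ and $b$ flipped.

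\emph{Step 2: realizing a distant flip by count moves.} The main obstacle is to bound $F(\sigma^{ab})-F(\sigma)$ for distant $a,b$ using only local moves, which flip only adjacent pairs, and pair moves, which flip nothing. The idea is a transport of a single point combined with pair moves. Take $k$ distributed as $\mu_\sigma$ and move one point from $a$ to an adjacent cell $a'$; this flips $(a,a')$. Then use pair moves to shuttle counts so that a point effectively travels from $a$ to $b$. A cleaner route is to write $F(\sigma^{ab})-F(\sigma)$ as a difference of $\E[f(K-e_a+e_b)]-\E[f(K)]$, corrected by the change of measure between $\mu_\sigma$ and $\mu_{\sigma^{ab}}$, and to decompose the single-point jump $a\to b$ into one local step together with pair moves. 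For instance, one can move a point locally $a\to a'$ and then move a pair $a'\to b$... This changes the parity pattern only at $a$ and $b$ when combined appropriately, for example by moving the pair $a'\to b'$ with $b'$ a neighbor of $b$, followed by a local step $b'\to b$. Each such path has bounded length (three or four moves, at most two of them local). By Cauchy--Schwarz, $\{f(\text{end})-f(\text{start})\}^2$ is bounded by a constant times the sum of the squared increments. Averaging over $a,b$ with weight $1/q$, each local edge is used by $O(q)$ pairs $(a,b)$ but with weight $1/q$, giving $O(1)$ times $\cE_{\rm loc}$. Each pair move $a'\to b'$ appears with rate comparable to $(k_{a'})_2/q$, which is of order $\rho^2/q$, giving $O(1)$ times $\cE_{\rm pair}$.

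\emph{Step 3: change-of-measure bookkeeping.} We must check that, after conditioning on the parity pattern, the pushforward of $\mu_\sigma$ under these moves has bounded density with respect to $\mu_{\sigma^{ab}}$ (and with respect to the intermediate measures). We also need the path rates $k_a$ and $(k_{a'})_2$ to be bounded below by constants on most of the mass. Both follow since $\rho$ lies between $\rho_*$ and $2\rho_*$: the counts are of order $\rho$ with high probability, and the low-count configurations, whose probability is $e^{-c\rho}$, are handled by using the rate weights directly, as in the Dirichlet forms. The conditional moment estimates of Appendix B would control these ratios. We expect this step, together with the uniform gap for the all-pairs flip chain under $\bar\pi$, to be the technical core.
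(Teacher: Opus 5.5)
\paragraph{There is a genuine gap, in Step 2.} Once you fix your Step 1, the inequality Step 2 needs is actually false, not just unproved.

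\emph{Why no short path exists.} Pair moves never change $\sigma$. A local move across $\{x,y\}\in E$ flips exactly $\sigma_x$ and $\sigma_y$. Reducing mod $2$, any sequence of count moves carrying parity $\sigma$ to $\sigma^{ab}$ must therefore cross every edge of the $T$-path from $a$ to $b$ an odd number of times. So there is no route of three or four moves for distant $a,b$. Your proposed route ($a\to a'$ locally, pair $a'\to b'$, then $b'\to b$) changes $k$ by $-e_a-e_{a'}+e_{b'}+e_b$, which flips four parities, not two.

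\emph{Counterexample.} Label $T$ as $1,2,\ldots,q$ and take $f(k):=\bigl(\sum_{a\le q/2}k_a\bigr)\bmod 2$. Then $F(\sigma)=\bigl(\sum_{a\le q/2}\sigma_a\bigr)\bmod 2$. Only local jumps across the middle edge change $f$, and pair moves shift the left-half count by $0$ or $\pm2$. On the other hand, $F(\sigma^{ab})\neq F(\sigma)$ for all roughly $q^2/2$ ordered pairs $(a,b)$ straddling the middle. Hence
\begin{align*}
    \cE_{\rm loc}(f,f)+\cE_{\rm pair}(f,f)=\rho,
    \qquad
    \frac1q\sum_{a,b}\E_{\bar\pi}\bigl[\{F(\sigma^{ab})-F(\sigma)\}^2\bigr]\approx\frac q2 .
\end{align*}
So the right-hand side of your Step 1 can never be bounded by $C\{\cE_{\rm loc}+\cE_{\rm pair}\}$. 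Routing along $T$-paths instead produces congestion of order $q^2$, which is exactly the loss you were trying to avoid.

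\paragraph{The missing idea.} The adjacent-flip chain you discarded is not Kawasaki dynamics, and its Poincar\'e constant is $O(1)$, not of order $q^2$.
\begin{enumerate}
\item Since $T$ is a tree, the map $\eta\mapsto\partial_T\eta$ from $\{0,1\}^E$ onto the even parity patterns is a bijection. Here $\eta_e$ is the parity of the number of points on one side of $e$; the $F$ above is a single such coordinate. The move $\tau_e$ becomes a single-bit update of $\eta_e$.
\item Since $\bar\pi(\sigma)$ depends only on $r=\sum_a\sigma_a$, through $Z_{q,r}$, the conditional law of $\eta_e$ given $\eta_{-e}$ is either exactly $1/2$ or $p_{q,r}$.
\item The Appendix~A bounds $A_{q,r}\le Ce^{-c\rho}$ and $|A_{q,r+2}-A_{q,r}|\le Ce^{-c\rho}/q$ give Dobrushin influences of at most $Ce^{-c\rho}$ between edges sharing a vertex and $Ce^{-c\rho}/q$ otherwise. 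The row sums are then at most $1/2$ once $\rho_*$ is large. This yields $\Var_{\bar\pi}(F)\le\sum_\sigma\sum_{e\in E}h_{\sigma,e}[F(\tau_e\sigma)-F(\sigma)]^2$ with no loss in $q$.
\item Each $\tau_e$ is realized by one local jump. Sample $U$ from $\mu_\sigma$ size-biased by $K_a+K_b$ and move one point across $e$. The resulting $V$ has exactly the size-biased law of $\mu_{\tau_e\sigma}$, so your Step 3 bounded-density bookkeeping is unnecessary.
\item The size-biasing error is $\Cov_{\mu_\sigma}(f,K_a+K_b)/m_{\sigma,e}$. It is controlled by the fixed-parity Poincar\'e inequality together with Appendix~B, via $\sum_a\Cov_{\mu_\sigma}(f,K_a)^2\le C\rho^2\Var_{\mu_\sigma}(f)$, and then Lemma~\ref{lem:fixed-parity}.
\end{enumerate}
That last step is where $\cE_{\rm pair}$ enters the argument, not through shuttling pairs between distant cells.
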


The proof consists of two steps: a Poincar\'e inequality for
the parity pattern, followed by a comparison with the count dynamics.

For an edge $e=\{a,b\}\in E$, define
\begin{align*}
    (\tau_e\sigma)_v
    &:=
    \begin{cases}
        1-\sigma_v, & v\in\{a,b\},\\
        \sigma_v, & v\notin\{a,b\},
    \end{cases}\\
    h_{\sigma,e}
    &:=\frac{\bar\pi(\sigma)\bar\pi(\tau_e\sigma)}
             {\bar\pi(\sigma)+\bar\pi(\tau_e\sigma)}.
\end{align*}
We will prove
\begin{align*}
    \Var_{\bar\pi}(F)
    &\le \sum_\sigma\sum_{e\in E}
        h_{\sigma,e}
        \bigl[F(\tau_e\sigma)-F(\sigma)\bigr]^2\\
    &\le C\bigl\{
        \cE_{\rm loc}(f,f)+\cE_{\rm pair}(f,f)
    \bigr\},
\end{align*}
where the sum over $\sigma$ runs over the support of $\bar\pi$.
The two inequalities are established in the following lemmas.

\begin{lemma}[Poincar\'e inequality for the parity pattern]
\label{lem:parity-poincare}
Under the assumptions of Proposition~\ref{prop:count-gap},
every real-valued function $F$ on the support of $\bar\pi$
satisfies
\begin{align*}
    \Var_{\bar\pi}(F)
    \le \sum_\sigma\sum_{e\in E}
        h_{\sigma,e}
        \bigl[F(\tau_e\sigma)-F(\sigma)\bigr]^2.
\end{align*}
\end{lemma}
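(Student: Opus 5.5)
The plan is to treat $\Var_{\bar\pi}(F)$ as the variance under a single-site heat-bath dynamics in suitable coordinates, and to show that this dynamics has spectral gap at least $\tfrac12$. The first step is a change of variables. Since every $\sigma$ in the support of $\bar\pi$ satisfies $\sum_a\sigma_a\equiv n \pmod 2$, I will order the cells along the path $T$ as $1,2,\ldots,q$ and set $x_i:=\sigma_1+\cdots+\sigma_i \bmod 2$ for $1\le i\le q-1$. The map $\sigma\mapsto x$ is a bijection from the support onto $\{0,1\}^{q-1}$, with $\sigma_q$ recovered from the parity of $n$. The edge flip $\tau_e$ for $e=\{i,i+1\}$ changes exactly the coordinate $x_i$. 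So the right-hand side of Lemma~\ref{lem:parity-poincare} is a sum over single-coordinate flips on a hypercube.

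The next step identifies that sum as a Dirichlet form. For the continuous-time heat-bath (Glauber) dynamics that resamples each coordinate $x_i$ at rate one from its conditional law, the jump rate from $\sigma$ to $\tau_e\sigma$ is $\bar\pi(\tau_e\sigma)/(\bar\pi(\sigma)+\bar\pi(\tau_e\sigma))$. Its Dirichlet form is therefore
\begin{align*}
    \cE_{\rm HB}(F,F)=\frac12\sum_\sigma\sum_{e\in E}h_{\sigma,e}\bigl[F(\tau_e\sigma)-F(\sigma)\bigr]^2 .
\end{align*}
If this dynamics has spectral gap $\gamma\ge\tfrac12$, then $\Var_{\bar\pi}(F)\le\gamma^{-1}\cE_{\rm HB}(F,F)$, which is the lemma. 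As a sanity check, under the uniform measure on the hypercube the gap is exactly $1$, so there is a factor of $2$ to spare.

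It remains to show that $\bar\pi$ is close enough to a product measure for Dobrushin's condition to give $\gamma\ge 1-\alpha$ with $\alpha\le\tfrac12$. The argument is path coupling with Hamming distance in the $x$-coordinates, using a total influence $\alpha=\max_j\sum_i \text{(influence of } x_j \text{ on } x_i)$. To get there I would use the explicit form of $\bar\pi$. By the multinomial generating function, with $m=|\sigma|$,
\begin{align*}
    \bar\pi(\sigma)\propto w(m):=[t^n]\,(\cosh t)^{q-m}(\sinh t)^m .
\end{align*}
A saddle point or local limit estimate at $t=\rho$, of the kind in Appendix~A, should give $w(m+1)/w(m)=\tanh\rho\,\bigl(1+O(1/q)\bigr)$ uniformly over the relevant range of $m$. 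Here $\tanh\rho=1-O(e^{-2\rho})$.

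This estimate drives the influence bound. In the $x$-coordinates, flipping $x_i$ changes $\sigma_i$ and $\sigma_{i+1}$, so $m$ changes by $-2$, $0$ or $+2$. The conditional probability of $x_i$ given the rest is then $\tfrac12+O(e^{-2\rho_*})+O(1/q)$. It depends on the neighbouring coordinates $x_{i\pm1}$ through the local terms, and on all other coordinates only through $m$, with sensitivity $O(1/q)$. Summing over coordinates gives $\alpha\le C e^{-2\rho_*}+C/q\cdot q\cdot(\text{small})$. For $\rho_*$ large this should be at most $\tfrac12$.

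The main obstacle is this last estimate. Proving rigorously that the mean-field dependence through $m$ contributes total influence that is small uniformly in $q$ requires a sharp, uniform version of the ratio $w(m+2)/w(m)$, including the second difference in $m$. It must hold even for atypical $m$, which I would handle by showing that the ratio stays within $[e^{-c},e^{c}]$ with $c$ small everywhere. That is what Appendix~A presumably supplies.
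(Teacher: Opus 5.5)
Your reduction is the paper's. The partial sums $x_i$ are exactly the edge bits $\eta_e$ with $\partial_T\eta=\sigma$, the right-hand side is twice the heat-bath Dirichlet form, and a Dobrushin bound with total influence at most $\tfrac12$ closes the argument. Your column-sum path coupling works as well as the paper's row-sum appeal to Wu, since the influence bounds are symmetric.

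The gap is the quantitative input: you leave it unproved, and your sketch would not deliver it. Only even $m$ occur, since $w(m+1)=0$ when $n$ is even. Set $A_{q,r}:=1-w(r+2)/w(r)$. The adjacent influence is $|p_{q,r}-\tfrac12|\le A_{q,r}/2$. The influence of a disjoint edge is at most $|A_{q,r'}-A_{q,r}|$ with $r'=r\pm2$. Dobrushin coefficients are suprema over configurations, so this must hold for \emph{every} even $r$, including atypical ones. You therefore need
\[
0\le A_{q,r}\le Ce^{-c\rho},\qquad \bigl|A_{q,r+2}-A_{q,r}\bigr|\le \frac{Ce^{-c\rho}}{q}\quad\text{for every even } r .
\]
Your two proposed estimates do not give the second bound.
\begin{itemize}
\item A relative error $O(1/q)$ in the ratio, with a constant that is not small, sums over the $q-1$ disjoint edges to an $O(1)$ influence. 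It is also not true uniformly in $m$: the saddle point solving $(q-m)\tanh t+m\coth t=n/t$ moves with $m/q$ by an amount of order $\rho e^{-2\rho}$, which does not decay in $q$.
\item Confining the ratio to $[e^{-c},e^{c}]$ yields only the first bound. It says nothing about increments in $r$ at scale $1/q$.
\end{itemize}
A saddle-point proof would need a second-order local limit theorem uniform in $r$ and $q$. That is the real content, and it is absent from your sketch.

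The paper proves exactly these two bounds in Lemma~\ref{lem:parity-estimates}, without asymptotics.
\begin{itemize}
\item It writes $A_{q,r}=\nu_m(J_a=J_b=0)$, the probability that two fixed even cells are empty under the fixed-parity law.
\item It bounds this by $e^{-(\rho-1)/2}$, using log-concavity relative to a Poisson law and Whitt's convex order.
\item It derives $A_{q,r+2}-A_{q,r}=\frac{A_{q,r}}{1-A_{q,r}}\E[z(m-L)-z(m)]$, where $z(u)$ is the emptiness probability on $q-2$ cells.
\item An Efron-monotone coupling of $\nu_u$ and $\nu_{u+1}$ sends the added pair into any fixed cell with probability at most $3/q$. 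This gives $0\le A_{q,r+2}-A_{q,r}\le 24\rho A_{q,r}/q$.
\end{itemize}
The prefactor $A_{q,r}$ supplies the small factor your sketch lacks.
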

\begin{proof}
We call $\sigma\in\{0,1\}^q$ an even parity pattern if
$\sum_a\sigma_a$ is even. Since $n$ is even, the support of $\bar\pi$ consists of all
even parity patterns.

To represent the change from $\sigma$ to $\tau_e\sigma$ by a
single-coordinate update, assign a bit $\eta_e\in\{0,1\}$ to
each edge of $T$ and define
\begin{align}\label{eq:edge_rep_parity}
    (\partial_T\eta)_a
    :=\sum_{e\ni a}\eta_e \pmod 2.
\end{align}
Thus $\partial_T$ maps edge bits to vertex parities. Changing
$\eta_e$ changes precisely the parities at the endpoints of $e$.

Since $T$ is a tree, every even parity pattern $\sigma$ has a
unique representation $\partial_T\eta=\sigma$.
Indeed, if $a$ is a leaf with incident edge $e=\{a,b\}$, then
$\eta_e$ must equal $\sigma_a$. Remove $a$ and $e$, and replace
$\sigma_b$ by $\sigma_b+\sigma_a\pmod 2$.
Repeating this procedure determines all edge bits uniquely.
The final vertex condition holds because the sum of the remaining
parities stays even throughout.

Consequently, $\partial_T$ is a bijection from $\{0,1\}^E$
to the even parity patterns, and
\begin{align}\label{eq:ver_edge_connection}
    \widetilde\pi(\eta):=\bar\pi(\partial_T\eta)
\end{align}
defines a probability law on $\{0,1\}^E$ whose image under
$\partial_T$ is $\bar\pi$. Setting
$\widetilde F(\eta):=F(\partial_T\eta)$, we therefore have
\begin{align*}
    \Var_{\widetilde\pi}(\widetilde F)=\Var_{\bar\pi}(F).
\end{align*}

Consider the Glauber dynamics under $\widetilde\pi$: each edge
has a rate-one clock, and when it rings, its bit is resampled
conditionally on all other bits.
We show that the conditional law of one bit depends only weakly
on the other bits, and therefore $\widetilde\pi$ satisfies the Poincar\'e inequality using Dobrushin's uniqueness condition \cite{Dobrushin1968,Wu2006}.

The multinomial weights give
\begin{align*}
    \bar\pi(\sigma)
    &=\frac{n!}{q^n}
      \sum_{\substack{k_1,\ldots,k_q\ge0\\
                      k_1+\cdots+k_q=n\\
                      k_a\equiv\sigma_a\pmod2\ \text{for all }a}}
      \prod_{a=1}^q\frac1{k_a!}.
\end{align*}
We express this sum using generating functions.
Here $z$ is a formal variable: for a power series
$A(z)=\sum_{j\ge0}a_jz^j$, define
\begin{align*}
    [z^n]A(z):=a_n
\end{align*}
as coefficient of $z^n$.

Recall that
\begin{align*}
    \sinh z&=\sum_{\substack{k\ge0\\k\text{ odd}}}\frac{z^k}{k!},
    &
    \cosh z&=\sum_{\substack{k\ge0\\k\text{ even}}}\frac{z^k}{k!}.
\end{align*}
Use a factor $\sinh z$ for each cell with $\sigma_a=1$
and a factor $\cosh z$ for each cell with $\sigma_a=0$.
Choosing a term from each factor produces
\begin{align*}
    \prod_{a=1}^q\frac{z^{k_a}}{k_a!}
    =\frac{z^{k_1+\cdots+k_q}}{k_1!\cdots k_q!}.
\end{align*}
The coefficient of $z^n$ therefore sums precisely the weights
with total count $n$ and parity pattern $\sigma$.
Hence, with $r:=\sum_a\sigma_a$, define
\begin{align*}
    Z_{q,r}:=[z^n]\sinh(z)^r\cosh(z)^{q-r},
\end{align*}
and obtain
\begin{align*}
    \bar\pi(\sigma)=\frac{n!}{q^n}Z_{q,r}.
\end{align*}

Fix an edge $e=\{a,b\}$ and condition on all other edge bits,
denoted by $\eta_{-e}$.
Changing $\eta_e$ changes both endpoint parities and leaves
all other vertex parities unchanged. There are two cases.

If the endpoint parities differ, their two possible values
are $(0,1)$ and $(1,0)$. These patterns have the same number
of odd vertices and hence the same weight under $\bar\pi$.
The two values of $\eta_e$ therefore each have conditional
probability $1/2$.

If the endpoint parities agree, their two possible values
are $(0,0)$ and $(1,1)$.
Let $r$ be the number of odd vertices outside $\{a,b\}$,
which is fixed by $\eta_{-e}$.
The corresponding full patterns have $r$ and $r+2$ odd
vertices, so their conditional probabilities are proportional
to $Z_{q,r}$ and $Z_{q,r+2}$, respectively. Thus
\begin{align*}
    \widetilde\pi(\sigma_a=\sigma_b=1\mid\eta_{-e})
    =\frac{Z_{q,r+2}}{Z_{q,r}+Z_{q,r+2}}
    =:p_{q,r}.
\end{align*}
Defining
\begin{align*}
    A_{q,r}:=1-\frac{Z_{q,r+2}}{Z_{q,r}},
\end{align*}
we can write
\begin{align*}
    p_{q,r}=\frac{1-A_{q,r}}{2-A_{q,r}}.
\end{align*}

We use the estimates from Appendix~\ref{app:parity},
\begin{align*}
    0\le A_{q,r}&\le Ce^{-c\rho},\\
    |A_{q,r+2}-A_{q,r}|&\le \frac{Ce^{-c\rho}}{q}.
\end{align*}
By choosing the fixed lower bound $\rho_*$ sufficiently large, $Ce^{-c\rho}$ can be bounded by any fixed small number.

For each edge $e$, write
\begin{align*}
    p_e(\eta)
    :=\widetilde\pi\bigl(\eta_e=1\mid\eta_{-e}\bigr).
\end{align*}
For distinct edges $e,e'$, define
\begin{align*}
    \alpha_{ee'}
    :=\sup_{\substack{\eta,\xi\in\{0,1\}^E\\
                     \eta_x=\xi_x\ \text{for all }x\ne e'}}
       |p_e(\eta)-p_e(\xi)|,
    \qquad \alpha_{ee}:=0.
\end{align*}
Thus $\alpha_{ee'}$ measures how much changing the bit at $e'$
can affect the conditional law of the bit at $e$.

If $e$ and $e'$ share a vertex, changing the bit at $e'$
changes whether the endpoint parities at $e$ agree.
Thus one of $p_e(\eta)$ and $p_e(\xi)$ equals $1/2$,
while the other equals $p_{q,r}$ or $1-p_{q,r}$ for some $r$.
Hence
\begin{align*}
    |p_e(\eta)-p_e(\xi)|
    =|p_{q,r}-1/2|
    \le Ce^{-c\rho}
\end{align*}

Now suppose $e=\{a,b\}$ and $e'=\{c,d\}$ are disjoint.
Changing $\eta_{e'}$ changes the parities at $c$ and $d$
but leaves those at $a$ and $b$ unchanged.
If the parities at $a$ and $b$ differ, the conditional
probability at $e$ remains $1/2$.

If the parities at $a$ and $b$ agree, let $r$ and $r'$
be the numbers of odd vertices outside $\{a,b\}$ before
and after changing $\eta_{e'}$. Since only $c$ and $d$
change parity,
\begin{align*}
    r'\in\{r-2,r,r+2\}.
\end{align*}
The conditional probability at $e$ changes from $p_{q,r}$
to $p_{q,r'}$, or from $1-p_{q,r}$ to $1-p_{q,r'}$. Using $0\le A_{q,r}\leq \min\{1,Ce^{-c\rho}\}$, we obtain
\begin{align*}
    |p_{q,r'}-p_{q,r}|
    &=\frac{|A_{q,r'}-A_{q,r}|}
            {(2-A_{q,r'})(2-A_{q,r})}\\
    &\le |A_{q,r'}-A_{q,r}|
     \le \frac{Ce^{-c\rho}}{q}.
\end{align*}
Combining the two cases gives
\begin{align*}
    \alpha_{ee'}
    \le
    \begin{cases}
        Ce^{-c\rho},
        & e\text{ and }e'\text{ share a vertex},\\
        \frac{Ce^{-c\rho}}{q},
        & e\text{ and }e'\text{ are disjoint}.
    \end{cases}
\end{align*}

Each edge meets at most two other edges, and $T$ has
$q-1$ edges in total.
Consequently, we can pick $\rho_*$ such that
\begin{align*}
    \sup_e\sum_{e'}\alpha_{ee'}
    \le Ce^{-c\rho}
    \leq \frac{1}{2}.
\end{align*}
Thus Dobrushin's uniqueness condition \cite{Dobrushin1968}
holds with influence row sums at most $1/2$.
The corresponding Poincar\'e inequality \cite{Wu2006} yields
\begin{align*}
    \Var_{\bar\pi}(F)
    =\Var_{\widetilde\pi}(\widetilde F)
    \le 2\sum_{e\in E}
        \E_{\widetilde\pi}\bigl[
            \Var_{\widetilde\pi}(\widetilde F\mid\eta_{-e})
        \bigr].
\end{align*}
For any $e\in E$, by \eqref{eq:ver_edge_connection} we have
\begin{align*}
    \E_{\widetilde\pi}\bigl[
        \Var_{\widetilde\pi}(\widetilde F\mid\eta_{-e})
    \bigr]
    &=
    \frac12\E_{\bar\pi}\left[
        \frac{\bar\pi(\tau_e\sigma)}
             {\bar\pi(\sigma)+\bar\pi(\tau_e\sigma)}
        \bigl(F(\tau_e\sigma)-F(\sigma)\bigr)^2
    \right]\\
    &=\frac12\sum_\sigma h_{\sigma,e}
        \bigl(F(\tau_e\sigma)-F(\sigma)\bigr)^2.
\end{align*}
Therefore
\begin{align*}
    \Var_{\bar\pi}(F)
    \leq 2\sum_{e\in E}
    \E_{\widetilde\pi}\bigl[
        \Var_{\widetilde\pi}(\widetilde F\mid\eta_{-e})
    \bigr] 
    =\sum_\sigma\sum_{e\in E}
        h_{\sigma,e}
        [F(\tau_e\sigma)-F(\sigma)]^2.
\end{align*}
\end{proof}

\begin{lemma}[Comparison with the count dynamics]
\label{lem:parity-comparison}
Under the assumptions of Proposition~\ref{prop:count-gap},
let $f:\Omega_{n,q}\to\mathbb R$ and
$F(\sigma):=\E_{\mu_\sigma}[f]$. Then
\begin{align*}
    \sum_\sigma\sum_{e\in E}
        h_{\sigma,e}
        \bigl[F(\tau_e\sigma)-F(\sigma)\bigr]^2
    \le C\bigl\{
        \cE_{\rm loc}(f,f)+\cE_{\rm pair}(f,f)
    \bigr\}.
\end{align*}
\end{lemma}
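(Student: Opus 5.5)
The plan is to compare $F(\tau_e\sigma)-F(\sigma)$ with the local moves across the edge $e=\{a,b\}$, using a transport (coupling) argument between $\mu_\sigma$ and $\mu_{\tau_e\sigma}$.

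\textbf{Step 1: a flow realized by single local moves.} A local move $k\to k\pm(e_a-e_b)$ flips the parities at $a$ and $b$ and nothing else. It therefore maps the slice $\{K\bmod 2=\sigma\}$ into the slice $\{K\bmod2=\tau_e\sigma\}$. Define the flow
\begin{align*}
\nu(k,k'):=\pi_{n,q}(k)\,c_{\rm loc}(k,k'),\qquad k\bmod2=\sigma,\ k'=k-e_a+e_b \text{ or } k-e_b+e_a .
\end{align*}
By detailed balance \eqref{eq:reversibility}, the total mass of $\nu$ is $M:=\E_{\pi}[(K_a+K_b)\mathbf 1\{K\bmod2=\sigma\}]$. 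This mass is comparable to $\bar\pi(\sigma)\rho$.

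The normalized marginals of $\nu$ are not exactly $\mu_\sigma$ and $\mu_{\tau_e\sigma}$; they are size-biased by $k_a+k_b$. So I split
\begin{align*}
F(\tau_e\sigma)-F(\sigma)=\bigl(\E_{\mu_{\tau_e\sigma}}f-\E_{\nu_2}f\bigr)+\bigl(\E_{\nu_2}f-\E_{\nu_1}f\bigr)+\bigl(\E_{\nu_1}f-\E_{\mu_\sigma}f\bigr),
\end{align*}
where $\nu_1,\nu_2$ are the normalized marginals.

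The middle term is handled by Cauchy--Schwarz along the flow:
\begin{align*}
(\E_{\nu_2}f-\E_{\nu_1}f)^2\le \frac{1}{M}\sum\nu(k,k')(f(k')-f(k))^2 .
\end{align*}
Multiplying by $h_{\sigma,e}\le\min(\bar\pi(\sigma),\bar\pi(\tau_e\sigma))$ and using $M\gtrsim\bar\pi(\sigma)$ (as $\rho\ge\rho_*$), this contributes at most $C$ times the part of $\cE_{\rm loc}$ on edge $e$ restricted to the slice $\sigma$. Summing over $\sigma$ and $e$ then gives $C\,\cE_{\rm loc}(f,f)$.

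The ratio $\bar\pi(\tau_e\sigma)/\bar\pi(\sigma)$ is bounded above and below by Appendix~A ($A_{q,r}$ small), so the orientation of $h$ causes no trouble.

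\textbf{Step 2: the size-bias terms.} The remaining terms have the form $\E_{\mu_\sigma}[f\,w]-\E_{\mu_\sigma}f$ with weight $w=(k_a+k_b)/\E_{\mu_\sigma}(k_a+k_b)$ (and similarly on the $\tau_e\sigma$ slice). Since $\E_{\mu_\sigma} w=1$,
\begin{align*}
\bigl|\Cov_{\mu_\sigma}(f,w)\bigr|^2\le \Var_{\mu_\sigma}(f)\,\Var_{\mu_\sigma}(w).
\end{align*}
Here $\Var_{\mu_\sigma}(w)\le C/\rho$ by the conditional moment estimates of Appendix~B.

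Summing naively over the $q-1$ edges would cost a factor $q$, which is too much. I would therefore use a sharper bound: $\sum_{e}\Cov_{\mu_\sigma}(f,w_e)^2\le C\Var_{\mu_\sigma}(f)$. Under $\mu_\sigma$ the centered counts $\{K_a\}$ are nearly orthogonal, so the $w_e$ behave like an almost orthonormal family. For the path $T$, the vectors $\{k_a+k_b\}$ have bounded overlap, so this is a Bessel-type inequality, requiring covariance bounds of order $O(1/q)$ off the diagonal. With it, the contribution is at most $C\,\E_{\bar\pi}\Var_{\mu_\sigma}(f)$, which Lemma~\ref{lem:fixed-parity} bounds by $C\,\cE_{\rm pair}(f,f)$.

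\textbf{Main obstacle.} I expect Step 2 to be the hard step. It needs a Bessel inequality for the centered weights under $\mu_\sigma$, uniformly in $\sigma$. I would first try to derive the near-orthogonality from the $J$-representation $K_a=2J_a+\sigma_a$. Under $\mu_\sigma$ the $J_a$ are nearly independent, up to the single sum constraint, which contributes a rank-one correction of size $1/q$. If that fails, the fallback is a projection argument: bound the projection of $f$ onto the span of linear count statistics using the explicit spectral decomposition of the pair dynamics from \cite{HermonSalez2019}.
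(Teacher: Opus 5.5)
Your Step 1 matches the paper's argument. You sample from $\mu_\sigma$ size-biased by $K_a+K_b$, move one point across $e$, use \eqref{eq:reversibility} to identify the result as size-biased $\mu_{\tau_e\sigma}$, and bound the transport term by $\cE_{\rm loc}(f,f)$. Step 2 also isolates the right quantities, namely $\Cov_{\mu_\sigma}(f,K_a+K_b)/m_{\sigma,e}$ with $m_{\sigma,e}=\E_{\mu_\sigma}[K_a+K_b]$. You correctly see that bounding them edge by edge loses a factor $q$.

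The gap is that the proposal stops exactly there. The inequality
\begin{align*}
    \sum_{\{a,b\}\in E}\Cov_{\mu_\sigma}(f,K_a+K_b)^2\le C\rho^2\Var_{\mu_\sigma}(f),
\end{align*}
uniformly in $\sigma$, is the whole content of Step 2, and it is not proved. The route you sketch does not close it with the tools at hand. Two-sided near-orthogonality with entrywise bounds $|\Cov_{\mu_\sigma}(K_a,K_c)|=O(1/q)$, uniform over all parity patterns, is a local-limit heuristic ("independent up to a rank-one correction"). Nothing in Appendices~\ref{app:parity}--\ref{app:conditional-moments} supplies it. The fallback through an explicit spectral decomposition is also more than you need.

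The missing idea is that a Bessel-type inequality needs only an \emph{upper} bound on the covariance form $u\mapsto\Var_{\mu_\sigma}(\sum_a u_aK_a)$. That bound is already contained in the fixed-parity Poincar\'e inequality \eqref{eq:fixed_parity_poincare}. A pair move from $a$ to $b$ changes $\sum_c u_cK_c$ by $2(u_b-u_a)$, so
\begin{align*}
    \Var_{\mu_\sigma}\Bigl(\sum_a u_aK_a\Bigr)
    \le\frac1q\sum_{a,b\in[q]}\E_{\mu_\sigma}[(K_a)_2](u_b-u_a)^2
    \le C\rho^2\sum_a u_a^2,
\end{align*}
using $\E_{\mu_\sigma}[(K_a)_2]\le3\rho^2$. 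Set $v_a:=\Cov_{\mu_\sigma}(f,K_a)$. Cauchy--Schwarz then gives
\begin{align*}
    \Bigl(\sum_a v_a^2\Bigr)^2
    =\Cov_{\mu_\sigma}\Bigl(f,\sum_a v_aK_a\Bigr)^2
    \le C\rho^2\Var_{\mu_\sigma}(f)\sum_a v_a^2.
\end{align*}
Since $T$ has maximum degree two, $\sum_{\{a,b\}\in E}(v_a+v_b)^2\le4\sum_a v_a^2$. Combining this with $h_{\sigma,e}\le\bar\pi(\sigma)$, $m_{\sigma,e}\ge\rho$ and Lemma~\ref{lem:fixed-parity} bounds the covariance terms by $C\cE_{\rm pair}(f,f)$.

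If you prefer to keep your covariance-matrix viewpoint, you still do not need entrywise $O(1/q)$ bounds. Apply the Efron monotonicity from Appendix~\ref{app:parity} after conditioning on $J_a$. It shows $\Cov_{\mu_\sigma}(K_a,K_c)\le0$ for $a\ne c$. The row sums vanish because $\sum_c K_c=n$. Schur's test then bounds the operator norm by $2\max_a\Var_{\mu_\sigma}(K_a)\le C\rho$.
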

\begin{proof}

For $e=\{a,b\}\in E$, local jumps across $e$ occur at total
rate $k_a+k_b$. Define their mean rate within parity sector
$\sigma$ by
\begin{align*}
    m_{\sigma,e}:=\E_{\mu_\sigma}[K_a+K_b].
\end{align*}
Summing the detailed balance identities for local jumps
across $e$ over configurations with parity pattern
$\sigma$ gives
\begin{align}\label{eq:mean_rate_reversibility}
    \bar\pi(\sigma)m_{\sigma,e} = \bar\pi(\tau_e\sigma)m_{\tau_e\sigma,e}
\end{align}
Hence
\begin{align*}
    h_{\sigma,e}
    =\frac{\bar\pi(\sigma)m_{\sigma,e}}
           {m_{\sigma,e}+m_{\tau_e\sigma,e}}.
\end{align*}

Fix $\sigma$ and $e=\{a,b\}\in E$. We construct random count configurations $(U,V)$ differing by one local jump.
First sample $U$ according to
\begin{align*}
    \mathbb P(U=k)
    =\frac{(k_a+k_b)\mu_\sigma(k)}{m_{\sigma,e}}.
\end{align*}
Given $U=k$, set
\begin{align*}
    V=
    \begin{cases}
        k-e_a+e_b,
        &\text{with probability }k_a/(k_a+k_b),\\
        k+e_a-e_b,
        &\text{with probability }k_b/(k_a+k_b).
    \end{cases}
\end{align*}
Configurations with $k_a+k_b=0$ have zero probability under
the law of $U$. It is not hard to see by reversibility \eqref{eq:reversibility} and \eqref{eq:mean_rate_reversibility} that
\begin{align*}
    \mathbb P(V=k')
    &=\frac{
        (k_a'+1)\mu_\sigma(k'+e_a-e_b)
        +(k_b'+1)\mu_\sigma(k'-e_a+e_b)}
        {m_{\sigma,e}}\\
    &=\frac{
        (k_a'+1)\pi_{n,q}(k'+e_a-e_b)
        +(k_b'+1)\pi_{n,q}(k'-e_a+e_b)}
        {\bar\pi(\sigma)m_{\sigma,e}}\\
    &=\frac{(k_a'+k_b')\pi_{n,q}(k')}
        {\bar\pi(\sigma)m_{\sigma,e}}\\
    &=\frac{(k_a'+k_b')\mu_{\tau_e\sigma}(k')}
        {m_{\tau_e\sigma,e}}.
\end{align*}
All expectations involving $(U,V)$ refer to this coupling
for the fixed pair $(\sigma,e)$.

The two marginal means satisfy
\begin{align*}
    \E[f(U)]
    &=\frac{\E_{\mu_\sigma}[(K_a+K_b)f(K)]}{m_{\sigma,e}}\\
    &=F(\sigma)
      +\frac{\Cov_{\mu_\sigma}(f,K_a+K_b)}{m_{\sigma,e}},\\
    \E[f(V)]
    &=F(\tau_e\sigma)
      +\frac{\Cov_{\mu_{\tau_e\sigma}}(f,K_a+K_b)}
             {m_{\tau_e\sigma,e}}.
\end{align*}
Subtracting and rearranging, we obtain
\begin{align*}
    F(\tau_e\sigma)-F(\sigma)
    &=\E[f(V)-f(U)]\\
    &\quad+\frac{\Cov_{\mu_\sigma}(f,K_a+K_b)}
                     {m_{\sigma,e}}
    -\frac{\Cov_{\mu_{\tau_e\sigma}}(f,K_a+K_b)}
           {m_{\tau_e\sigma,e}}.
\end{align*}

Using $(x+y+z)^2\le3(x^2+y^2+z^2)$ and Jensen's
inequality, we obtain
\begin{align*}
    [F(\tau_e\sigma)-F(\sigma)]^2
    &\le 3\E[(f(V)-f(U))^2]\\
    &\quad+
    \frac{3\Cov_{\mu_\sigma}(f,K_a+K_b)^2}
         {m_{\sigma,e}^2}
    +\frac{3\Cov_{\mu_{\tau_e\sigma}}(f,K_a+K_b)^2}
          {m_{\tau_e\sigma,e}^2}.
\end{align*}
We first control the contribution of the first term.

Since $K_a+K_b\ge\sigma_a+\sigma_b$ under $\mu_\sigma$,
and the endpoint parities are reversed under
$\mu_{\tau_e\sigma}$,
\begin{align*}
    m_{\sigma,e}+m_{\tau_e\sigma,e}
    \ge (\sigma_a+\sigma_b)+(2-\sigma_a-\sigma_b)=2.
\end{align*}
Consequently,
\begin{align*}
    \frac{h_{\sigma,e}}{m_{\sigma,e}}
    =\frac{\bar\pi(\sigma)}
           {m_{\sigma,e}+m_{\tau_e\sigma,e}}
    \le \frac{\bar\pi(\sigma)}2.
\end{align*}
By the definition of the coupling,
\begin{align}\label{eq:UV_bound}
    &\sum_\sigma\sum_{e\in E}
        h_{\sigma,e}\E[(f(V)-f(U))^2]\notag\\
    &=
    \sum_\sigma\sum_{e=\{a,b\}\in E}
    \frac{h_{\sigma,e}}{m_{\sigma,e}}
    \E_{\mu_\sigma}\Bigl[
        K_a\{f(K-e_a+e_b)-f(K)\}^2\notag\\
    &\hspace{48mm}
        +K_b\{f(K+e_a-e_b)-f(K)\}^2
    \Bigr]\notag\\
    &\le
    \frac12\E_{\pi_{n,q}}
    \sum_{\{a,b\}\in E}\Bigl[
        K_a\{f(K-e_a+e_b)-f(K)\}^2
        +K_b\{f(K+e_a-e_b)-f(K)\}^2
    \Bigr]\notag\\
    &=\cE_{\rm loc}(f,f).
\end{align}
We now control the two covariance terms. Since
$h_{\sigma,e}=h_{\tau_e\sigma,e}$ and $\tau_e$ is an
involution, their summed contributions are equal:
\begin{align*}
    &\sum_\sigma\sum_{e=\{a,b\}\in E}h_{\sigma,e}
    \left[
        \frac{\Cov_{\mu_\sigma}(f,K_a+K_b)^2}{m_{\sigma,e}^2}
        +\frac{\Cov_{\mu_{\tau_e\sigma}}(f,K_a+K_b)^2}
              {m_{\tau_e\sigma,e}^2}
    \right]\\
    &=2\sum_\sigma\sum_{e=\{a,b\}\in E}
        \frac{h_{\sigma,e}}{m_{\sigma,e}^2}
        \Cov_{\mu_\sigma}(f,K_a+K_b)^2.
\end{align*}

We use the conditional moment estimates
\begin{align*}
    \E_{\mu_\sigma}[K_a]\ge c\rho,
    \qquad
    \E_{\mu_\sigma}[(K_a)_2]\le C\rho^2,
\end{align*}
uniformly over $\sigma$ and $a$, whose proofs are deferred
to Appendix~\ref{app:conditional-moments}.

Fix $\sigma$. For any $u=(u_1,\ldots,u_q)\in\mathbb R^q$,
\eqref{eq:fixed_parity_poincare} gives
\begin{align*}
    \Var_{\mu_\sigma}\left(\sum_a u_aK_a\right)
    &\le \frac12\cE_{{\rm pair},\sigma}
        \left(\sum_a u_aK_a,\sum_a u_aK_a\right)\\
    &=\frac1q\sum_{a,b\in [q]}
        \E_{\mu_\sigma}[(K_a)_2](u_b-u_a)^2\\
    &\le \frac{C\rho^2}{q}\sum_{a,b\in [q]}(u_b-u_a)^2
    \le C\rho^2\sum_a u_a^2.
\end{align*}

Set $v_a:=\Cov_{\mu_\sigma}(f,K_a)$.By Cauchy--Schwarz
and the preceding estimate,
\begin{align*}
    \left(\sum_a v_a^2\right)^2
    &=\Cov_{\mu_\sigma}\left(f,\sum_a v_aK_a\right)^2\\
    &\le \Var_{\mu_\sigma}(f)\,
        \Var_{\mu_\sigma}\left(\sum_a v_aK_a\right)\\
    &\le C\rho^2\Var_{\mu_\sigma}(f)\sum_a v_a^2.
\end{align*}
Dividing both sides by $\sum_a v_a^2$ yields (the same inequality holds when $\sum_a v_a^2 = 0$)
\begin{align*}
    \sum_a\Cov_{\mu_\sigma}(f,K_a)^2
    \le C\rho^2\Var_{\mu_\sigma}(f).
\end{align*}
Since every vertex of $T$ has at most two neighbors,
\begin{align*}
    \sum_{\{a,b\}\in E}\Cov_{\mu_\sigma}(f,K_a+K_b)^2
    &=\sum_{\{a,b\}\in E}(v_a+v_b)^2\\
    &\le2\sum_{\{a,b\}\in E}(v_a^2+v_b^2)\\
    &\le4\sum_a v_a^2
    \le C\rho^2\Var_{\mu_\sigma}(f).
\end{align*}

Finally, $h_{\sigma,e}\le\bar\pi(\sigma)$ and
$m_{\sigma,e}\ge2c\rho$. Therefore,
\begin{align*}
    &\sum_\sigma\sum_{e=\{a,b\}\in E}
        \frac{h_{\sigma,e}}{m_{\sigma,e}^2}
        \Cov_{\mu_\sigma}(f,K_a+K_b)^2\\
    &\le \frac{C}{\rho^2}\sum_\sigma\bar\pi(\sigma)
        \sum_{\{a,b\}\in E}\Cov_{\mu_\sigma}(f,K_a+K_b)^2\\
    &\le C\E_{\bar\pi}\bigl[\Var_{\mu_\sigma}(f)\bigr]
    \le C\cE_{\rm pair}(f,f),
\end{align*}
where the last inequality follows from
Lemma~\ref{lem:fixed-parity}.
Combining this with \eqref{eq:UV_bound} proves the lemma.
\end{proof}

\begin{proof}[Proof of Lemma~\ref{lem:between-parities}]
Combining Lemmas~\ref{lem:parity-poincare}
and~\ref{lem:parity-comparison}, we obtain
\begin{align*}
    \Var_{\bar\pi}(F)
    &\le \sum_\sigma\sum_{e\in E}
        h_{\sigma,e}
        [F(\tau_e\sigma)-F(\sigma)]^2\\
    &\le C\bigl\{
        \cE_{\rm loc}(f,f)+\cE_{\rm pair}(f,f)
    \bigr\}.
\end{align*}
\end{proof}

\begin{proof}[Proof of Proposition~\ref{prop:count-gap}]
Set $F(\sigma):=\E_{\mu_\sigma}[f]$.
By the law of total variance and
Lemmas~\ref{lem:fixed-parity}
and~\ref{lem:between-parities},
\begin{align*}
    \Var_{\pi_{n,q}}(f)
    &=\E_{\bar\pi}\bigl[\Var_{\mu_\sigma}(f)\bigr]
      +\Var_{\bar\pi}(F)\\
    &\le C\bigl\{
        \cE_{\rm loc}(f,f)+\cE_{\rm pair}(f,f)
    \bigr\}.
\end{align*}
\end{proof}

\section{The metric variance upper bound}\label{sec:upper}

In this section, we bound the first term in
\eqref{eq:upper_decomposition}, remove the upper bound on
$\rho$, and complete the proof of Theorem~\ref{thm:main}.

\begin{lemma}\label{lem:conditional-position-variance}
For every $k\in\Omega_{n,q}$,
\begin{align*}
    \Var(H_n\mid K=k)\le \frac12 n\ell^2.
\end{align*}
Consequently,
\begin{align*}
    \E[\Var(H_n\mid K)]\le \frac12 n\ell^2.
\end{align*}
\end{lemma}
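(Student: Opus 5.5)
Conditionally on $K=k$, the sample can be realized as independent points: for each cell $a$, exactly $k_a$ i.i.d.\ points with law $\mu_a$, all cells independent. Since the cell assignment of each index is fixed given $K$ (or, more simply, after relabeling the points so that the first $k_1$ lie in $\mathcal C_1$, and so on), $H_n$ is a symmetric function of $n$ independent random variables $Y_1,\ldots,Y_n$, where $Y_i\sim\mu_{a(i)}$ and $a(i)$ is the cell assigned to slot $i$. I would apply the Efron--Stein inequality in the form
\begin{align*}
    \Var(H_n\mid K=k)\le\frac12\sum_{i=1}^n\E\bigl[(H_n-H_n^{(i)})^2\bigr],
\end{align*}
where $H_n^{(i)}$ is the cost with $Y_i$ replaced by an independent copy $Y_i'\sim\mu_{a(i)}$.

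The key step is the same one-point replacement bound used in Lemma~\ref{lem:conditional-mean-energy}. Take an optimal matching for the original configuration and substitute $Y_i'$ for $Y_i$ in the pair containing it. The triangle inequality then gives $H_n^{(i)}-H_n\le d(Y_i,Y_i')$, and by symmetry $|H_n^{(i)}-H_n|\le d(Y_i,Y_i')$. Since $Y_i$ and $Y_i'$ are independent with the common law $\mu_{a(i)}$, the within-cell part of Assumption~\ref{ass:partition} yields $\E[d(Y_i,Y_i')^2]\le\ell^2$. Summing over the $n$ coordinates then gives $\Var(H_n\mid K=k)\le\frac12 n\ell^2$. This holds for every $k$, so taking the expectation over $K$ gives the second claim.

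The only point requiring care is justifying the conditional independence structure. Given $K=k$, the set of indices lying in each cell is random, but given that set the positions are independent with laws $\mu_a$. Since $H_n$ is invariant under permuting indices, we may condition further on the index-to-cell assignment and average; the variance bound holds conditionally on each assignment with the same constant. Passing back to conditioning on $K$ alone requires that the conditional mean not depend on the assignment. By symmetry it does not, so the law of total variance adds no extra term. I expect no genuine obstacle here; the argument is a direct Efron--Stein computation.
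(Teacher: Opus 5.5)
Your proposal is correct and follows the paper's proof: sample the points independently within cells given $K=k$, bound each one-point replacement by $|H_n-H_n^{(i)}|\le d(Y_i,Y_i')$, apply the within-cell second-moment bound, and sum via Efron--Stein. Your extra remark, that the conditional law of $H_n$ is the same for every index-to-cell assignment consistent with $k$, only makes explicit a step the paper leaves implicit.
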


\begin{proof}
Fix $k\in\Omega_{n,q}$. Conditional on $K=k$, the point
configuration can be generated by sampling independent points
\begin{align*}
    Z_{a,j}\sim\mu(\,\cdot\mid\mathcal C_a),
    \qquad a\in[q],\quad 1\le j\le k_a.
\end{align*}
This gives the conditional law of $H_n$.

Let $H_n^{a,j}$ be the matching cost after replacing
$Z_{a,j}$ by an independent point $Z_{a,j}'$ with the same
conditional law. Replacing this point in an optimal matching,
applying the triangle inequality, and reversing the comparison
gives
\begin{align*}
    |H_n-H_n^{a,j}|
    \le d(Z_{a,j},Z_{a,j}').
\end{align*}
Since $Z_{a,j}$ and $Z_{a,j}'$ are independent with law
$\mu_a$, Assumption~\ref{ass:partition} gives
\begin{align*}
    \E[(H_n-H_n^{a,j})^2\mid K=k]
    \le\E[d(Z_{a,j},Z_{a,j}')^2\mid K=k]
    \le\ell^2.
\end{align*}
The Efron--Stein inequality for the independent points
$(Z_{a,j})$ therefore yields
\begin{align*}
    \Var(H_n\mid K=k)
    &\le\frac12\sum_{a=1}^q\sum_{j=1}^{k_a}
        \E[(H_n-H_n^{a,j})^2\mid K=k]\\
    &\le\frac{\ell^2}{2}\sum_{a=1}^q k_a
    =\frac12n\ell^2.
\end{align*}
Averaging over $K$ proves the second assertion.
\end{proof}

Therefore, with Lemma~\ref{lem:conditional-mean-energy}, Proposition \ref{prop:count-gap}, and Lemma \ref{lem:conditional-position-variance}, we can prove Theorem \ref{thm:main} under additionally restriction $\rho\le2\rho_*$:
\begin{align*}
    \Var(H_n)
    =\E[\Var(H_n\mid K)]
      +\Var\bigl(\E[H_n\mid K]\bigr)
    \le Cn\ell^2.
\end{align*}

It remains to remove the restriction $\rho\le2\rho_*$. For arbitrary $\rho\ge\rho_*$, set
\begin{align*}
    s:=\left\lfloor\frac{\rho}{\rho_*}\right\rfloor,
    \qquad \widetilde q:=qs.
\end{align*}
Since $s\le\rho/\rho_*<s+1\le2s$, we have
\begin{align*}
    \rho_*\le\frac{n}{\widetilde q}
    =\frac{\rho}{s}\le2\rho_*.
\end{align*}

Independently of the sample points, assign to each $X_i$
an independent uniform label $J_i\in[s]$.
A point has type $(a,j)$ if $X_i\in\mathcal C_a$ and $J_i=j$.
Each of the $\widetilde q$ types has probability $1/\widetilde q$.
Thus the type counts
\begin{align*}
    \widetilde K_{a,j}
    :=\#\{i:X_i\in\mathcal C_a,\ J_i=j\}
\end{align*}
have the multinomial law $\pi_{n,\widetilde q}$.

The vertices of $\widetilde G$ are the types $(a,j)\in[q]\times[s]$.
For distinct vertices $(a,j)$ and $(b,k)$, define
\begin{align*}
    \{(a,j),(b,k)\}\in E(\widetilde G)
    \quad\Longleftrightarrow\quad
    a=b
    \ \text{or}\
    \bigl(j=k\text{ and }\{a,b\}\in E(G)\bigr).
\end{align*}
Thus one may change the label within a cell, or move along
an edge of $G$ while keeping the label fixed. Since $G$
is connected, $\widetilde G$ is connected. The spatial
law of type $(a,j)$ is $\mu_a$. Thus independent points
of the same type or adjacent types in $\widetilde G$
have mean squared distance at most $\ell^2$.

Applying the graph construction in
Proposition~\ref{prop:cell-path} to $\widetilde G$ gives
a path through all types with maximum degree two.
Independent points belonging to consecutive types on this
path have mean squared distance at most $9\ell^2$.

Conditional on the type counts, the point configuration
can be generated independently within each type, using
$\mu(\,\cdot\mid\mathcal C_a)$ for type $(a,j)$.
Consequently, the preceding Lemma~\ref{lem:conditional-mean-energy}, Proposition \ref{prop:count-gap}, and Lemma
\ref{lem:conditional-position-variance} arguments also apply with types. 
Applying Lemma~\ref{lem:conditional-position-variance}
to the types gives
\begin{align*}
    \E[\Var(H_n\mid\widetilde K)]\le\frac12n\ell^2.
\end{align*}
Since $n/\widetilde q\in[\rho_*,2\rho_*]$,
Proposition~\ref{prop:count-gap} and
Lemma~\ref{lem:conditional-mean-energy}, applied to the
type counts and their path, yield
\begin{align*}
    \Var\bigl(\E[H_n\mid\widetilde K]\bigr)
    \le Cn\ell^2.
\end{align*}
Therefore,
\begin{align*}
    \Var(H_n)
    =\E[\Var(H_n\mid\widetilde K)]
      +\Var\bigl(\E[H_n\mid\widetilde K]\bigr)
    \le Cn\ell^2.
\end{align*}
This proves Theorem~\ref{thm:main}.

\appendix
\section{Parity estimates}\label{app:parity}

The first lemma supplies the estimates used in
Lemma~\ref{lem:parity-poincare}.

\begin{lemma}\label{lem:parity-estimates}
There are universal constants $\rho_0,c,C>0$ such that
the following holds. Let $n$ be a positive even integer
and $q\ge4$ satisfy $\rho:=n/q\ge\rho_0$.
For integers $0\le r\le s$, write
\begin{align*}
    Z_{s,r}:=[z^n](\sinh z)^r(\cosh z)^{s-r},
\end{align*}
and define
\begin{align*}
    A_{q,r}:=1-\frac{Z_{q,r+2}}{Z_{q,r}},
    \qquad 0\le r\le q-2,\quad r\text{ even}.
\end{align*}
Then
\begin{align*}
    0\le A_{q,r}
    &\le Ce^{-c\rho},
    &&0\le r\le q-2,\quad r\text{ even},\\
    0\le A_{q,r+2}-A_{q,r}
    &\le \frac{Ce^{-c\rho}}q,
    &&0\le r\le q-4,\quad r\text{ even}.
\end{align*}
\end{lemma}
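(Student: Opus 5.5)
Write $S:=\sinh z$ and $\Gamma:=\cosh z$, and use the identities $\Gamma^2-S^2=1$ and $\Gamma=\tfrac12(e^z+e^{-z})$, $S=\tfrac12(e^z-e^{-z})$. The plan is to expand $Z_{q,r}-Z_{q,r+2}$ as an explicit nonnegative coefficient. Since
\begin{align*}
    S^r\Gamma^{q-r}-S^{r+2}\Gamma^{q-r-2}=S^r\Gamma^{q-r-2}(\Gamma^2-S^2)=S^r\Gamma^{q-r-2},
\end{align*}
we get
\begin{align*}
    Z_{q,r}-Z_{q,r+2}=[z^n]S^r\Gamma^{q-2-r}=Z_{q-2,r},
    \qquad\text{so}\qquad
    A_{q,r}=\frac{Z_{q-2,r}}{Z_{q,r}}.
\end{align*}
All $Z$'s are coefficients of products of series with nonnegative coefficients, so $A_{q,r}\ge0$ is immediate.

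For the upper bound, expand in exponentials. With $e^z,e^{-z}$ we have
\begin{align*}
    S^r\Gamma^{s-r}=2^{-s}\sum_{j}c^{(s,r)}_j e^{(s-2j)z},
    \qquad [z^n]e^{mz}=\frac{m^n}{n!},
\end{align*}
where $c^{(s,r)}_j=\sum_i(-1)^i\binom ri\binom{s-r}{j-i}$ are Krawtchouk-type coefficients, bounded by $|c_j|\le\binom sj$. Then
\begin{align*}
    n!\,2^sZ_{s,r}=\sum_j c_j^{(s,r)}(s-2j)^n .
\end{align*}
The $j=0$ and $j=s$ terms each equal $s^n$ (because $n$ is even and $r$ is even), and they dominate. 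The remaining terms are bounded by
\begin{align*}
    \sum_{j=1}^{s-1}\binom sj(s-2j)^n\le 2s^n\sum_{j\ge1}\binom sj e^{-2jn/s}\le 2s^n\bigl[(1+e^{-2\rho'})^s-1\bigr],
\end{align*}
where $\rho'=n/s$. For $s\in\{q,q-2\}$ this is $\le 2s^n\cdot Cse^{-2\rho'}$. The term $se^{-2\rho}$ is not small unless $\rho\gtrsim\log q$, and this is the \textbf{main obstacle}: the naive exponential expansion loses a factor $q$. To deal with it, I would instead normalize by the leading term and write
\begin{align*}
    n!\,2^sZ_{s,r}=2s^n\,\E\bigl[\cdots\bigr],
\end{align*}
or better, use a saddle-point/probabilistic representation. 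Namely $Z_{s,r}=\frac{s^n}{n!}\Gamma(t)^{s}\ldots$: take $t=\rho$ and write
\begin{align*}
    Z_{s,r}=\frac{\cosh(t)^s}{t^n}\,
    \mathbb P\Bigl(\sum_a Y_a=n,\ Y_a\ \text{has parity pattern}\Bigr),
\end{align*}
where the $Y_a$ are independent with law $\propto t^k/k!$ restricted to the prescribed parity (even for $q-r$ cells, odd for $r$ cells). Then the key input is that the even- and odd-restricted Poisson laws with mean $\approx t$ differ in total variation by $O(e^{-2t})$, and that their mean and variance are $t+O(e^{-2t})$ and $t+O(e^{-2t})$.

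The ratio $Z_{q-2,r}/Z_{q,r}$ then equals $\cosh(t)^{-2}t^2\cdot$(ratio of local probabilities for sums of $q-2$ versus $q$ variables at $n$). A local CLT comparison makes this $\asymp t^2\cosh(t)^{-2}\cdot\frac{\text{density of }q-2\text{ terms at }n}{\text{density of }q\text{ terms at }n}$, which should be $\le C\rho^2e^{-2\rho}\le Ce^{-c\rho}$. Here I would choose $t$ as the saddle point $t_s$ solving $s\,t\tanh(t)^{\pm}\approx n$, so that both local probabilities are $\asymp(n)^{-1/2}$ by a uniform local limit theorem for lattice sums (the parity restriction makes the lattice span $2$, which is handled by the evenness of $n$ and $r$).

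For the increment bound, write
\begin{align*}
    A_{q,r+2}-A_{q,r}=\frac{Z_{q-2,r+2}}{Z_{q,r+2}}-\frac{Z_{q-2,r}}{Z_{q,r}}.
\end{align*}
In the probabilistic representation, changing $r\to r+2$ swaps two even-restricted variables for odd-restricted ones, and each swap perturbs the distribution by total variation $O(e^{-2t})$. Since the ratio in question is a smooth functional of an empirical parity proportion $r/q$, I would show that $A_{q,r}=\Phi(r/q)+O(e^{-c\rho}/q^{2})$ with $\Phi'=O(e^{-c\rho})$, which gives the $1/q$ gain. Concretely, this means differentiating the saddle-point equation in $r$: each of the two swaps moves the saddle by $O(e^{-2t}/q)$ and the local-CLT ratio by a relative $O(e^{-2t}/q)$. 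Monotonicity ($A_{q,r+2}\ge A_{q,r}$) I would get from log-concavity in $r$ of $r\mapsto Z_{q,r}$. That follows from Newton-type inequalities, since $Z_{q,r}$ is the $r$-th "elementary symmetric" coefficient of the commuting pair $(S,\Gamma)$, that is, the coefficients of the real-rooted-in-$x$ polynomial $[z^n](\Gamma+xS)^q$. Proving that real-rootedness, or an equivalent total positivity, is the step I am least sure of; if it fails, I would settle for $|A_{q,r+2}-A_{q,r}|$, which is all that Lemma~\ref{lem:parity-poincare} actually uses.
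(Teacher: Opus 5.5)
Your reduction $A_{q,r}=Z_{q-2,r}/Z_{q,r}\ge 0$ is exactly the paper's first step. For $A_{q,r}\le Ce^{-c\rho}$ you switch to exponential tilting plus a local CLT. That is a different and much heavier route than the paper's. There, with $J_a=(K_a-\sigma_a)/2$, the quantity $A_{q,r}=\nu_m(J_a=J_b=0)$ is the $\mu_\sigma$-probability that two fixed even cells are empty. Since $J_a$ is log-concave relative to a Poisson variable with the same mean, Whitt's convex order gives $A_{q,r}\le\nu_m(J_a=0)\le e^{-\E J_a}\le e^{-(\rho-1)/2}$. Your version is repairable in principle, but it needs a local CLT uniform in $q\ge4$, $r$ and $t\ge\rho_0$, which you only assert. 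Also, the tilted identity should read $Z_{s,r}=t^{-n}\sinh(t)^r\cosh(t)^{s-r}\,\mathbb P(\sum_aY_a=n)$. So at a common $t$ the ratio is $\cosh(t)^{-2}$ times a ratio of local probabilities, with no factor $t^2$.

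The genuine gap is the increment bound. Writing $A_{q,r}=\Phi(r/q)+O(e^{-c\rho}/q^2)$ with $\Phi'=O(e^{-c\rho})$ essentially restates what must be proved. It requires the local-CLT error terms to be controlled to absolute precision $O(e^{-c\rho}/q)$ in $A$, and to vary smoothly in $r$, uniformly in all parameters; nothing in the proposal does this.

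Your stated input is also off. Single even- and odd-restricted Poisson laws have disjoint supports, so their total variation distance is $1$. What is true is that the law of a sum of two even-restricted variables and that of two odd-restricted ones are $O(e^{-2t})$ apart (this is $\sinh^2=\cosh^2-1$ again). But a perturbation of that size at best yields a relative change of order $e^{-2t}$ in $A_{q,r}$, with no factor $1/q$.

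The idea you are missing is an exact identity. Since $Z_{q-2,r+2}=Z_{q-2,r}-Z_{q-4,r}$ and $Z_{q,r+2}=Z_{q,r}-Z_{q-2,r}$,
\[
A_{q,r+2}-A_{q,r}=\frac{A_{q,r}}{1-A_{q,r}}\bigl(A_{q,r}-A_{q-2,r}\bigr),
\qquad
A_{q-2,r}=\frac{Z_{q-4,r}}{Z_{q-2,r}}\ \text{(same $n$)}.
\]
The prefactor already carries $e^{-c\rho}$, so it suffices that $0\le A_{q,r}-A_{q-2,r}\le C\rho/q$. In words: deleting two even cells $c,d$ changes the probability that two other even cells $a,b$ are empty by $O(\rho/q)$.

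The paper gets this by conditioning on $L=J_c+J_d$. Let $z(u)$ be the probability that $a,b$ are empty for the product weights on the other $q-2$ cells conditioned to carry $u$ pairs. Then $A_{q,r}=\E[z(m-L)]$ with $m=(n-r)/2$ and $z(m)=A_{q-2,r}$. An Efron-monotone coupling of these conditioned measures, in which each added pair enters a given cell with probability at most $3/(q-2)$, gives $0\le z(m-L)-z(m)\le 6L/(q-2)$. Finally $\E L\le\rho$.

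The same coupling gives the sign: $z$ is nonincreasing, so $A_{q,r}\ge A_{q-2,r}$ and hence $A_{q,r+2}\ge A_{q,r}$. Your proposal leaves this open, and your mechanism for it fails as stated. Because $n$ is even, $x\mapsto[z^n](\cosh z+x\sinh z)^q$ is an even polynomial with nonnegative coefficients, positive constant term and degree at least two. So it has no real roots at all and is not real-rooted. Retreating to $|A_{q,r+2}-A_{q,r}|$ would suffice for Lemma~\ref{lem:parity-poincare}, but it does not prove the lemma as stated.
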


\begin{proof}
Throughout the proof, the coefficient index $n$ is fixed.
Taking $\rho_0>1$ ensures that $n>q$, so all
$Z_{q,r}$ with even $r\le q$ are positive.
The identity $\cosh^2z-\sinh^2z=1$ gives
\begin{align*}
    Z_{q,r}-Z_{q,r+2}=Z_{q-2,r},
    \qquad
    A_{q,r}=\frac{Z_{q-2,r}}{Z_{q,r}}\ge0.
\end{align*}

Fix $\sigma\in\{0,1\}^q$ with $\sum_a\sigma_a=r$,
and put $m=(n-r)/2$.
For each integer $u\ge0$, define
\begin{align*}
    \nu_u(j)
    &:=
    \frac{\prod_{a=1}^q w_a(j_a)}
         {\displaystyle
          \sum_{\substack{i\in\mathbb Z_{\ge0}^q\\
                          \sum_a i_a=u}}
          \prod_{a=1}^q w_a(i_a)},\\
    w_a(v)&:=\frac1{(2v+\sigma_a)!},
    \qquad
    j\in\mathbb Z_{\ge0}^q,\quad \sum_a j_a=u.
\end{align*}
Let $J$ have law $\nu_u$ and write $K=2J+\sigma$.
Under $\nu_m$, the vector $K$ has law $\mu_\sigma$.
If $a,b$ are distinct even cells, then
$J_a=J_b=0$ means that both cells are empty. Hence
\begin{align*}
    \nu_m(J_a=J_b=0)
    &=\frac{[z^n](\sinh z)^r(\cosh z)^{q-r-2}}
            {[z^n](\sinh z)^r(\cosh z)^{q-r}}\\
    &=\frac{Z_{q-2,r}}{Z_{q,r}}
     =A_{q,r}.
\end{align*}

\medskip
\noindent\emph{Comparison of the cell means.}
Suppose that $\sigma_a=0$ and $\sigma_b=1$.
Conditional on $J_a+J_b=v$, the factorial weights give
\begin{align*}
    \mathbb P_{\nu_u}(K_a=k\mid J_a+J_b=v)
    =\frac{\binom{2v+1}{k}}{2^{2v}},
    \qquad k=0,2,\ldots,2v.
\end{align*}
For $v\ge1$, using
$k\binom{2v+1}{k}=(2v+1)\binom{2v}{k-1}$, we obtain
\begin{align*}
    \E_{\nu_u}[K_a\mid J_a+J_b=v]
    &=\frac{2v+1}{2^{2v}}
      \sum_{\substack{j=0\\j\text{ odd}}}^{2v}
          \binom{2v}{j}\\
    &=\frac{2v+1}{2}.
\end{align*}
Since $K_a+K_b=2v+1$,
\begin{align*}
    \E_{\nu_u}[J_a-J_b\mid J_a+J_b=v]
    &=\frac12
      \E_{\nu_u}[K_a-K_b+1\mid J_a+J_b=v]\\
    &=
    \begin{cases}
        0, & v=0,\\
        \frac12, & v\ge1.
    \end{cases}
\end{align*}
Consequently,
\begin{align}\label{eq:J_diff}
    \E_{\nu_u}[J_a-J_b]
    =\frac12\bigl(1-\nu_u(J_a=J_b=0)\bigr),
\end{align}
and
\begin{align}\label{eq:K_diff}
    \E_{\nu_u}[K_b-K_a]
    =\nu_u(J_a=J_b=0)\in[0,1].
\end{align}

By symmetry, all even cells have the same mean,
as do all odd cells. Since $\sum_a\E_{\nu_m}K_a=n$,
every even cell satisfies
\begin{align*}
    \rho-1\le\E_{\nu_m}K_a\le\rho,
    \qquad
    \E_{\nu_m}J_a\ge\frac{\rho-1}{2}.
\end{align*}
If all cells are even, their mean counts are exactly
$\rho$. These mean comparisons are also valid for
any number of cells at least two.

\medskip
\noindent\emph{Comparison with a Poisson distribution.}
Fix $u\ge0$ and a cell $a$, and write
$p_j:=\nu_u(J_a=j)$.
For each integer $s\ge0$, define
\begin{align*}
    h_a(s)
    :=\sum_{\substack{j_b\ge0,\ b\ne a\\
                     \sum_{b\ne a}j_b=s}}
        \prod_{b\ne a}w_b(j_b).
\end{align*}
Thus $h_a$ is the convolution of the weight sequences
$(w_b)_{b\ne a}$, and
\begin{align*}
    j!p_j
    =\frac{j!w_a(j)h_a(u-j)}
           {\displaystyle\sum_{v=0}^u w_a(v)h_a(u-v)},
    \qquad 0\le j\le u.
\end{align*}
The sequence $j!w_a(j)$ is log-concave, since
\begin{align*}
    \frac{(j+1)!w_a(j+1)}{j!w_a(j)}
    =\frac1{2(2j+2\sigma_a+1)}
\end{align*}
is decreasing in $j$.
Each $w_b$ is also log-concave, so their convolution
$h_a$ is log-concave. Therefore
$j\mapsto h_a(u-j)$ is log-concave, and the preceding
product representation shows that $(j!p_j)_{j=0}^u$
is log-concave.

Put $\lambda:=\E_{\nu_u}J_a$, and let
$W\sim\operatorname{Poisson}(\lambda)$.
If $\lambda>0$, then
\begin{align*}
    \frac{p_j}{\mathbb P(W=j)}
    =e^\lambda\lambda^{-j}j!p_j
\end{align*}
is log-concave in $j$. Thus the law of $J_a$ is
log-concave relative to that of $W$.
Since the two distributions have the same mean,
Whitt's convex-order comparison
\cite[Corollary~1(c), p.~624]{Whitt1985} gives
\begin{align*}
    \E_{\nu_u}[\varphi(J_a)]
    \le\E[\varphi(W)]
\end{align*}
for every convex $\varphi$ for which the expectations
exist. Indeed, relative log-concavity makes the ratio of the
probability masses unimodal. Since $J_a$ and $W$ have
the same finite mean, they cannot be stochastically
ordered unless their distributions coincide.
Thus, apart from this immediate equality case,
$J_a$ is uniformly conditionally less variable than $W$
in Whitt's sense, denoted by $J_a\le_{\rm uv}W$. Taking $\varphi(x)=(1-x)_+$ gives
\begin{align*}
    \nu_u(J_a=0)
    \le\mathbb P(W=0)
    =e^{-\E_{\nu_u}J_a}.
\end{align*}
Taking $\varphi(x)=x^2$ also gives
\begin{align}\label{eq:Var(J)_bound}
    \Var_{\nu_u}(J_a)\le\E_{\nu_u}J_a.
\end{align}
These one-coordinate estimates hold for any number
of cells at least two.

\medskip
\noindent\emph{The bound on $A_{q,r}$.}
For distinct even cells $a,b$, the preceding estimates
yield
\begin{align*}
    A_{q,r}
    &=\nu_m(J_a=J_b=0)\\
    &\le\nu_m(J_a=0)\\
    &\le e^{-\E_{\nu_m}J_a}
     \le e^{-(\rho-1)/2}.
\end{align*}

\medskip
\noindent\emph{Adding one pair.}
After normalizing each weight sequence $w_a$,
the law $\nu_u$ is the law of independent log-concave
integer-valued variables conditioned on their sum
being $u$.
By discrete Efron monotonicity
\cite[Theorem~6.1 and Remark~6.2]{SaumardWellner2014},
we may couple
$J^{(u)}\sim\nu_u$ and $J^{(u+1)}\sim\nu_{u+1}$
so that
\begin{align*}
    J_a^{(u)}\le J_a^{(u+1)}
    \qquad\text{for every }a\in[q].
\end{align*}
Their coordinate sums differ by one, so there is
a random cell index $I$ such that
\begin{align*}
    J^{(u+1)}=J^{(u)}+e_I.
\end{align*}
We claim that
\begin{align}\label{eq:increase_bound}
    \mathbb P(I=a)\le\frac3q
    \qquad\text{for every }a.
\end{align}

By symmetry of the marginals,
\begin{align*}
    \mathbb P(I=a)
    =\E_{\nu_{u+1}}J_a-\E_{\nu_u}J_a
\end{align*}
depends only on $\sigma_a$.
If all cells have the same parity, this probability
is $1/q$.
Otherwise, define
\begin{align*}
    d_0&:=\E_{\nu_{u+1}}J_a-\E_{\nu_u}J_a =\mathbb P(I=a),
    &&\sigma_a=0,\\
    d_1&:=\E_{\nu_{u+1}}J_b-\E_{\nu_u}J_b =\mathbb P(I=b),
    &&\sigma_b=1.
\end{align*}
By symmetry within each parity class, these values
do not depend on the choice of $a$ or $b$. For $\sigma_a=0$ and $\sigma_b=1$,
\eqref{eq:J_diff} gives
\begin{align*}
    d_0-d_1
    &=\frac12\bigl(
        \nu_u(J_a=J_b=0)
        -\nu_{u+1}(J_a=J_b=0)
    \bigr)\\
    &= \frac{1}{2}\mathbb P\bigl(J_a^{(u)}=J_b^{(u)}=0,\ I\in\{a,b\}\bigr)\\
    &\leq \frac{1}{2}\left(\mathbb P(I = a) + \mathbb P(I=b)\right) \\
    &= \frac{1}{2}(d_0+d_1).
\end{align*}
Therefore,
\begin{align*}
    d_1\le d_0\le3d_1.
\end{align*}
Together with
\begin{align*}
    (q-r)d_0+rd_1=1,
\end{align*}
this implies $d_0,d_1\le3/q$, proving
\eqref{eq:increase_bound}.

\medskip
\noindent\emph{The difference bound.}
Suppose that $r\le q-4$, and choose four distinct
even cells $a,b,c,d$.
For each integer $u\ge0$, let $\widehat\nu_u$ be the
law on the remaining $q-2$ cells with weights
\begin{align*}
    \widehat\nu_u(j)
    \propto\prod_{i\notin\{c,d\}}w_i(j_i),
    \qquad
    \sum_{i\notin\{c,d\}}j_i=u,
\end{align*}
and put
\begin{align*}
    z(u):=\widehat\nu_u(J_a=J_b=0).
\end{align*}
Apply the preceding coupling construction to these
$q-2$ cells. The event $\{J_a=J_b=0\}$ can cease
to hold only if the added pair enters $a$ or $b$.
By \eqref{eq:increase_bound},
\begin{align*}
    0\le z(u)-z(u+1)\le\frac6{q-2}.
\end{align*}

Under $\nu_m$, put $L:=J_c+J_d$.
Conditional on $L$, the remaining coordinates have
law $\widehat\nu_{m-L}$. Hence
\begin{align*}
    A_{q,r}=\E_{\nu_m}[z(m-L)].
\end{align*}
Under $\widehat\nu_m$, there are still $n$ points
and $r$ odd cells, so
\begin{align*}
    z(m)=\frac{Z_{q-4,r}}{Z_{q-2,r}}.
\end{align*}
Using $\cosh^2z-\sinh^2z=1$ again,
\begin{align*}
    A_{q,r+2}
    &=\frac{Z_{q-2,r+2}}{Z_{q,r+2}}\\
    &=\frac{Z_{q-2,r}-Z_{q-4,r}}
            {Z_{q,r}-Z_{q-2,r}}\\
    &=\frac{A_{q,r}(1-z(m))}{1-A_{q,r}}.
\end{align*}
Consequently,
\begin{align*}
    A_{q,r+2}-A_{q,r}
    &=\frac{A_{q,r}}{1-A_{q,r}}
      \bigl(A_{q,r}-z(m)\bigr)\\
    &=\frac{A_{q,r}}{1-A_{q,r}}
      \E_{\nu_m}[z(m-L)-z(m)].
\end{align*}
Summing the successive differences of $z$ gives
\begin{align*}
    0\le z(m-L)-z(m)\le\frac{6L}{q-2}.
\end{align*}
Since $c,d$ are even cells, the mean comparison gives
\begin{align*}
    \E_{\nu_m}L
    =\frac12\E_{\nu_m}(K_c+K_d)
    \le\rho.
\end{align*}
Using $q\ge4$ and $A_{q,r}\le1/2$, we obtain
\begin{align*}
    0\le A_{q,r+2}-A_{q,r}
    &\le
    \frac{6A_{q,r}}{(q-2)(1-A_{q,r})}
        \E_{\nu_m}L\\
    &\le\frac{24\rho A_{q,r}}q\\
    &\le\frac{24\rho}{q}e^{-(\rho-1)/2}
     \le\frac{Ce^{-c\rho}}q.
\end{align*}
Together with the first bound, this proves the lemma.
\end{proof}

\section{Conditional moments}\label{app:conditional-moments}

The following lemma supplies the estimates used in
Lemma~\ref{lem:parity-comparison}.
Its proof uses the mean and variance estimate
established in the proof of Lemma~\ref{lem:parity-estimates}.
These calculations hold for every $q\ge2$ and every
fixed parity pattern.

\begin{lemma}\label{lem:conditional-moments}
Let $n$ be a positive even integer and $q\ge4$,
with $\rho:=n/q\ge2$.
For every $\sigma$ with $\bar\pi(\sigma)>0$
and every $a\in[q]$,
\begin{align*}
    \E_{\mu_\sigma}[K_a]&\ge\frac{\rho}{2},\\
    \E_{\mu_\sigma}[(K_a)_2]&\le3\rho^2.
\end{align*}
\end{lemma}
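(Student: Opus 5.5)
The plan is to work in the representation $K=2J+\sigma$ with $J\sim\nu_m$, $m=(n-r)/2$, $r=\sum_a\sigma_a$, from the proof of Lemma~\ref{lem:parity-estimates}. There, $\mu_\sigma$ is the law of $K$. I will reuse two facts established there: the mean comparison \eqref{eq:K_diff}, and the Poisson variance bound \eqref{eq:Var(J)_bound}. The remark before the lemma says both hold for every $q\ge2$ and every parity pattern.

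\emph{Lower bound on the mean.} By symmetry, all even cells share a common mean $\bar k_0$ and all odd cells share a common mean $\bar k_1$. By \eqref{eq:K_diff}, $\bar k_0\le\bar k_1\le\bar k_0+1$; if only one parity class is present, all means equal $\rho$. Since $(q-r)\bar k_0+r\bar k_1=n$, we get $\rho-1\le\bar k_0\le\rho\le\bar k_1\le\rho+1$. Hence $\E_{\mu_\sigma}K_a\ge\rho-1\ge\rho/2$, using $\rho\ge2$.

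\emph{Upper bound on the second factorial moment.} I will write $(K_a)_2\le K_a^2=\Var(K_a)+(\E K_a)^2$. Since $\Var(K_a)=4\Var(J_a)$, \eqref{eq:Var(J)_bound} gives $\Var(K_a)\le4\E J_a=2(\E K_a-\sigma_a)\le2\E K_a$. Combining this with $\E K_a\le\rho+1$ yields
\begin{align*}
\E_{\mu_\sigma}[(K_a)_2]\le(\rho+1)^2+2(\rho+1)=\rho^2+4\rho+3.
\end{align*}
To get $\le3\rho^2$ we need $2\rho^2-4\rho-3\ge0$, which holds for $\rho\ge1+\sqrt{5/2}\approx2.58$ but fails near $\rho=2$. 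The crude $K_a^2$ bound therefore must be sharpened.

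The sharper route uses $(K_a)_2=K_a^2-K_a$, so $\E(K_a)_2=\Var K_a+(\E K_a)^2-\E K_a$. With $\Var K_a\le2\E K_a$ this gives
\begin{align*}
\E(K_a)_2\le x^2+x,\qquad x:=\E K_a\le\rho+1.
\end{align*}
Since $x^2+x$ is increasing, the bound is at most $(\rho+1)(\rho+2)=\rho^2+3\rho+2$. This is $\le3\rho^2$ iff $2\rho^2-3\rho-2\ge0$, i.e.\ $\rho\ge2$, which is exactly the hypothesis.

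\emph{Main obstacle.} The only delicate point is confirming that \eqref{eq:Var(J)_bound} and \eqref{eq:K_diff} apply at $u=m$ for an arbitrary pattern $\sigma$, including the degenerate case where $\lambda=\E J_a=0$ and Whitt's comparison is vacuous. In that case $J_a\equiv0$ and the variance bound holds trivially. Once that is checked, the rest is the arithmetic above.
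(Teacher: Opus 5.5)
Your proposal is correct and follows the paper's proof. Like the paper, you use \eqref{eq:K_diff} with symmetry to get $\rho-1\le\E_{\mu_\sigma}K_a\le\rho+1$, and \eqref{eq:Var(J)_bound} to get $\E_{\mu_\sigma}(K_a)_2\le(\E_{\mu_\sigma} K_a)^2+\E_{\mu_\sigma} K_a$; the only difference is the final arithmetic, where you check $(\rho+1)(\rho+2)\le3\rho^2$ directly while the paper uses $\rho+1\le\tfrac32\rho$.
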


\begin{proof}
Fix $\sigma$, and put
\begin{align*}
    r:=\sum_a\sigma_a,\qquad
    m:=\frac{n-r}{2},\qquad
    J_a:=\frac{K_a-\sigma_a}{2}.
\end{align*}
Under $\mu_\sigma$, the vector $J$ has law $\nu_m$.
If both parity classes are present, then
\eqref{eq:K_diff} gives, for an even cell $a$
and an odd cell $b$,
\begin{align*}
    \E_{\mu_\sigma}(K_b-K_a)
    =\mathbb P_{\mu_\sigma}(J_a=J_b=0)\in[0,1].
\end{align*}
By symmetry within each parity class and
$\sum_a\E_{\mu_\sigma}K_a=n$, it follows that
\begin{align*}
    \rho-1\le\E_{\mu_\sigma}K_a\le\rho+1
    \qquad\text{for every }a\in[q].
\end{align*}
If all cells have the same parity, their means are
exactly $\rho$, so these bounds still hold.
In particular, for $\rho\ge2$,
\begin{align*}
    \frac{\rho}{2}
    \le\rho-1
    \le\E_{\mu_\sigma}K_a
    \le\rho+1
    \le\frac{3\rho}{2},
    \qquad a\in[q].
\end{align*}
Note by \eqref{eq:Var(J)_bound},
\begin{align*}
    \Var_{\mu_\sigma}(J_a)\le\E_{\mu_\sigma}J_a.
\end{align*}
Since $K_a=2J_a+\sigma_a$, we obtain
\begin{align*}
    \Var_{\mu_\sigma}(K_a)
    &=4\Var_{\mu_\sigma}(J_a)\\
    &\le4\E_{\mu_\sigma}J_a
     =2\bigl(\E_{\mu_\sigma}K_a-\sigma_a\bigr)
     \le2\E_{\mu_\sigma}K_a.
\end{align*}
Therefore,
\begin{align*}
    \E_{\mu_\sigma}[(K_a)_2]
    &=\Var_{\mu_\sigma}(K_a)
      +(\E_{\mu_\sigma}K_a)^2-\E_{\mu_\sigma}K_a\\
    &\le(\E_{\mu_\sigma}K_a)^2+\E_{\mu_\sigma}K_a\\
    &\le\frac94\rho^2+\frac32\rho
     \le3\rho^2,
\end{align*}
where the last inequality uses $\rho\ge2$.
This proves both estimates.
\end{proof}

\section*{Acknowledgments}
The author thanks his advisor, Philippe Sosoe, for suggesting a more detailed explanation of the proof strategy in Section~\ref{ssec:proof_strategy}. The author also thanks David Aldous for pointing out the work of Goldman and Trevisan, and Sourav Chatterjee for helping clarify the upper bounds cited in the literature.

\clearpage
\bibliographystyle{alphaurl}
\bibliography{References}

\end{document}